\newif\ifpictures
\numberwithin{equation}{section}
\newtheorem{thm}{Theorem}
\newtheorem{prop}[thm]{Proposition}
\newtheorem{lemma}[thm]{Lemma}
\newtheorem{cor}[thm]{Corollary}
\theoremstyle{definition}
\newtheorem{example}[thm]{Example}
\newtheorem{remark1}[thm]{Remark}
\newtheorem{openproblem1}[thm]{Open problem}
\newtheorem{definition}[thm]{Definition}
\newenvironment{ex}{\begin{example}\rm}{\hfill$\Box$\end{example}}
\numberwithin{thm}{section}
\newcounter{FNC}[page]
\def\newfootnote#1{{\addtocounter{FNC}{2}$^\fnsymbol{FNC}$%
		\let\thefootnote\relax\footnotetext{$^\fnsymbol{FNC}$#1}}}
\newcommand{\C}{\mathbb{C}}
\newcommand{\N}{\mathbb{N}}
\newcommand{\R}{\mathbb{R}}
\newcommand{\Z}{\mathbb{Z}}
\newcommand{\cC}{\mathcal{C}}
\newcommand{\one}{\mathbf{1}} 
\DeclareMathOperator{\supp}{supp}
\DeclareMathOperator{\diag}{diag}
\DeclareMathOperator{\tr}{tr}
\DeclareMathOperator{\argmax}{argmax}
\DeclareMathOperator{\im}{im}
\DeclareMathOperator{\rank}{rank}
\newcommand{\sym}{\mathcal{S}}
\def\@settitle{\begin{center}%
		\baselineskip14\p@\relax
		\bf\Large
		\@title
	\end{center}%
}
\title[Nash equilibria in semidefinite games and Lemke-Howson paths]{Nash 
	equilibria in semidefinite games and Lemke-Howson paths}
\author[C.~Ickstadt]{Constantin Ickstadt}
\author[T.~Theobald]{Thorsten Theobald}
\address{Constantin Ickstadt, Thorsten Theobald:
        Goethe-Universit\"at, FB 12 -- Institut f\"ur Mathematik,
        Postfach 11 19 32, 60054 Frankfurt am Main, Germany}
\email{ickstadt@math.uni-frankfurt.de,
        theobald@math.uni-frankfurt.de}
\author[E.~Tsigaridas]{Elias Tsigaridas}
\address{Elias Tsigaridas: Sorbonne Universit\'e, Paris University, CNRS and Inria Paris.
        IMJ-PRG,  4 place Jussieu,
        75252 Paris Cedex 05, France}
\email{elias.tsigaridas@inria.fr}
\author[A.~Varvitsiotis]{Antonios Varvitsiotis}
\address{Antonios Varvitsiotis: Singapore University of Technology and Design (ESD Pillar),
        8 Somapah Road 487372, Singapore}
\email{avarvits@gmail.com}
\date{\today}
\begin{document}

\begin{abstract}
 We consider an algorithmic framework for two-player non-zero-sum semidefinite games, where each player's strategy is a positive semidefinite matrix with trace one.
We formulate the computation of Nash equilibria in such games as semidefinite complementarity problems, and develop symbolic–numeric techniques to trace generalized Lemke–Howson paths. These paths generalize the piecewise affine-linear trajectories of the classical Lemke–Howson algorithm for bimatrix games, replacing them with nonlinear curve branches governed by eigenvalue complementarity conditions.

A key feature of our framework is the introduction of event points, which correspond to curve singularities. We analyze the local behavior near these points using Puiseux series expansions.
We prove the smoothness of the curve branches under suitable non-degeneracy conditions and establish connections between our approach and both the classical combinatorial and homotopy-theoretic interpretations of the Lemke–Howson algorithm.
\end{abstract}

\maketitle

\section{Introduction}

In the classical model of a bimatrix game,  Nash equilibria can be characterized  as
the solutions of a linear complementarity problem with a product
structure \cite{cps-1992}. The combinatorial description of these
linear complementarity problems
gives rise to an algorithm
for computing a Nash equilibrium, known as
\emph{Lemke-Howson algorithm}~\cite{lemke-howson-1964}. 
This approach provides a
piecewise linear path in an extended Cartesian product of strategy spaces
and the algorithmic key idea is to follow this path until we reach a Nash equilibrium.
The path, known as
\emph{Lemke-Howson path} and an underlying combinatorial graph are cornerstones
for both structural and computational results in bimatrix game theory
\cite{shapley-1974,von-stengel-handbook}.
Conceptually, 
Rosenm\"uller \cite{rosenmueller-1971} extended 
the Lemke-Howson algorithm for finite $N$-person games, 
where the affine-linear curve branches  are replaced by nonlinear curve branches.

There has been significant recent interest in the study of \emph{quantum games}, where players can store, process, and exchange quantum information; see, e.g.,~\cite{guo-2008,watrous1,meyer-1999,ksb-2018,unitary}. A notable subclass  are \emph{semidefinite games}, involving  players whose strategies are represented by \emph{density matrices}, i.e., Hermitian (or symmetric) positive semidefinite matrices with trace equal to one, and whose utilities are multilinear functions in these strategies~\cite{itt-2024}.

For two-player zero-sum semidefinite games, Nash equilibria are guaranteed to exist and we can compute them efficiently using semidefinite programming (SDP)~\cite{itt-2024}. 
This framework has been further extended to \emph{zero-sum network semidefinite games}~\cite{ittv-2023}. Additionally, no-regret learning algorithms have been developed for computing equilibria in zero-sum semidefinite games~\cite{lpsv-2024,cig,quadratic}.

However, beyond the zero-sum case, the problem of computing or approximating Nash equilibria in general semidefinite games remains largely open. Notably, Bostanci and Watrous~\cite{watrous2} posed the specific question of whether a Lemke-Howson-style algorithm could be designed for computing or approximating Nash equilibria in the context of non-zero-sum semidefinite games.

We take on this challenge and we develop the theory of the structure of the generalized Lemke-Howson
paths for two-player semidefinite games. 
We rely on the theory of semidefinite games
to introduce naturally the generalized Lemke-Howson paths. 
In turn, these provide the basis for
symbolic-numeric algorithmic approaches to compute Nash equilibria in
non-zero-sum semidefinite games.

The first step of our study consists of providing a comprehensive eigenvalue view on the Nash
equilibria in semidefinite games. 
Then, by exploiting the formulation of the Nash equilibria
in semidefinite games as a semidefinite complementarity problem~\cite{ittv-2023},
we reveal the interplay of the eigenvalues of the underlying matrices for the
Nash equilibrium problem in Section~\ref{se:eigenvalue-view}. 
Consequently, using rank characterizations of the
strict complementarity conditions from semidefinite programming, we turn
the eigenvalue characterizations into a non-degeneracy notion for semidefinite games.
For the special case of diagonal semidefinite games, which correspond to bimatrix 
games, our approach specializes to the (classical) set of equivalent non-degeneracy notions, presented by von Stengel~\cite[Chapter~45]{von-stengel-handbook}.

These tools, based on eigenvalues, provide us with the means
to study the generalized Lemke-Howson paths.
We replace the combinatorial view of bimatrix games by
the interplay of eigenvalues in semidefinite games
and we replace the bilinear complementarity
conditions by bilinear polynomial equations
coming from a product of two positive semidefinite matrices (equal to zero).
In this way, we generalize the Lemke-Howson paths
by providing a distinguished finite set of points,
which we call \emph{event points},
where the combinatorics of the eigenvalue complementarity pairing changes, within an extended pair
of strategy spaces. The event points are interconnected by curve branches. 
While the curve branches are no longer affine-linear as in the case of bimatrix games, 
but inherently nonlinear, they still retain similar combinatorial properties as the
affine-linear pieces from the bimatrix setting.
We address some algebraic and numerical aspects
of computing, identifying, and tracing the non-linear curve branches,
and also how to pick the correct next branch
in an event point, see Section~\ref{se:tracing}. Locally at an event point, the
branches are described in terms of Puiseux series.

In settings related to numerical algorithms for tracing or approximating
a specific algebraic curve in a finite-dimensional space,
an essential question is whether the underlying algebraic curve is smooth.
For example, the central path in semidefinite programming, which is numerically
followed by interior point methods, is known to be a smooth curve,
see \cite{deklerk-2003}. For the Lemke-Howson paths, we show in Section~\ref{se:smooth}
that under the non-degeneracy condition, 
the curve branches between two event points are smooth.

One basis of our treatment is the classical combinatorial view on the 
Lemke-Howson algorithm. A second fruitful point of departure is
to view the Lemke-Howson
algorithm for bimatrix games as a homotopy algorithm; see Govindan and
Wilson \cite[Section 5.4]{govindan-wilson-2003}
and Herings and Peeters \cite{herings-peeters-2010}.
To prepare for the generalized semidefinite setting, it is beneficial to connect the
combinatorial view on the Lemke-Howson approach with the homotopy view. Therefore,
we review the Lemke-Howson algorithm for bimatrix games
in Section~\ref{se:lemke-howson-bimatrix} from the point of view
of a homotopy algorithm.

To ensure that the homotopy methods work generically for bimatrix games,
we use the Kohlberg-Mertens structure theorem~\cite{kohlberg-mertens-1986}.
This provides a topological description of the "graph of the
Nash equilibrium correspondence".
The Kohlberg-Mertens structure theorem is also valid in more general
semialgebraic contexts, see Bich and Fixary~\cite{bich-fixary-2024},
and in particular it holds for semidefinite games.

\section{Preliminaries\label{se:prelim}}

\subsection{Nash equilibria in semidefinite games}

Let $\mathcal{S}_n$ be the set of symmetric $n \times n$ matrices and
$\mathcal{S}_n^+ \subset \mathcal{S}_n$ be the subset of positive semidefinite matrices.
We consider semidefinite games between two players with the strategy spaces of real-valued density matrices $\mathcal{X} = \{ X \in \sym_m^+ \, : \, \tr(X) = 1\}$ and $\mathcal{Y} = \{Y \in \sym^+_n \, : \, \tr(Y) = 1\}$, where $\tr$ denotes
the trace of a matrix.
The payoff functions of a semidefinite game on $\mathcal{X} \times \mathcal{Y}$
are
\begin{eqnarray*}
  p_A(X, Y) \ = \ \sum_{i,j,k,l} X_{ij} A_{ijkl} Y_{kl}
  \; \text{ and } \;
  p_B(X, Y) \ = \ \sum_{i,j,k,l} X_{ij} B_{ijkl} Y_{kl}
 .
\end{eqnarray*}

If we define the symmetric $m \times m$-matrix $\Phi_A(Y)$ and the symmetric 
$n \times n$-matrix $\Phi'_B(X)$ as
\[
\Phi_A (Y)_{ij} := \sum\limits_{k,l=1}^n A_{ijkl} Y_{kl}
\; \text { and } \;
\Phi'_B (X)_{kl} := \sum\limits_{i,j=1}^m X_{ij} B_{ijkl},
\]
then
the payoffs are
\[
p_A(X,Y) \ = \ \langle X , \Phi_A (Y) \rangle \; \text{and } \;
p_B(X,Y) \ = \ \langle \Phi'_B (X), Y \rangle \, ,
\]
where $\langle \cdot, \cdot \rangle$ denotes the Frobenius dot product.
In \cite{ittv-2023}, the following statement was shown in the more
general context of semidefinite network games.

\begin{thm}
\label{th:nash-sdp1}
The set of Nash equilibria of a semidefinite game are the solutions to
a semidefinite complementarity problem. Namely, a point 
$(X^*,Y^*) \in \mathcal{X} \times \mathcal{Y}$ is a Nash equilibrium
if and only if
\begin{equation}
  \label{eq:cp1}
  \begin{array}{rcl}
  w \cdot I_m - \Phi_A(Y) & \succeq & 0 \, , \\
  v \cdot I_n -  \Phi'_B(X) & \succeq & 0 \, , \\
  \langle X,
  w \cdot I_m - \Phi_A(Y) \rangle & = & 0 \, ,
  \\
  \langle Y, v \cdot I_n -  \Phi'_B(X)\rangle & = & 0 \, ,
  \end{array}
\end{equation}
where $w$ is the payoff of the first player and $v$ is the payoff of the second player.
\end{thm}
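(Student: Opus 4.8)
The plan is to exploit the product structure of the Nash condition: a pair $(X^*,Y^*)$ is a Nash equilibrium precisely when $X^*$ is a best response to $Y^*$ \emph{and} $Y^*$ is a best response to $X^*$, and these two best-response conditions decouple. I would therefore establish the best-response characterization for the first player in full and then invoke the symmetry $X \leftrightarrow Y$, $\Phi_A \leftrightarrow \Phi_B'$ for the second. Fixing $Y^* \in \mathcal{Y}$, the map $X \mapsto p_A(X,Y^*) = \langle X, \Phi_A(Y^*)\rangle$ is linear in $X$, so player one's problem is the maximization of a linear functional over the spectrahedron $\mathcal{X} = \{X \succeq 0 : \tr X = 1\}$. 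The key tool is the variational description of the largest eigenvalue: for any symmetric matrix $C$ one has
\[
\max_{X \in \mathcal{X}} \langle X, C\rangle = \lambda_{\max}(C),
\]
with the maximum attained exactly at those $X \in \mathcal{X}$ whose range lies in the top eigenspace of $C$. Applied with $C = \Phi_A(Y^*)$, this shows that $X^*$ is a best response to $Y^*$ if and only if $p_A(X^*,Y^*) = \lambda_{\max}(\Phi_A(Y^*))$.

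Next I would translate this eigenvalue identity into the semidefinite complementarity form of \eqref{eq:cp1}. Setting $w := p_A(X^*,Y^*)$, the complementarity equation $\langle X^*, w\cdot I_m - \Phi_A(Y^*)\rangle = 0$ unfolds to $w\,\tr(X^*) = \langle X^*, \Phi_A(Y^*)\rangle$, i.e.\ to $w = p_A(X^*,Y^*)$; thus the equation simply pins down $w$ as the payoff of the first player. The substantive content lies in the inequality $w\cdot I_m - \Phi_A(Y^*) \succeq 0$, which is equivalent to $w \ge \lambda_{\max}(\Phi_A(Y^*))$. Since $p_A(X^*,Y^*) \le \lambda_{\max}(\Phi_A(Y^*))$ holds for every $X^* \in \mathcal{X}$, this inequality forces equality and so recovers exactly the best-response condition. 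Here I would also record the standard complementarity fact that for positive semidefinite matrices $X^*$ and $M := w\cdot I_m - \Phi_A(Y^*)$ one has $\langle X^*, M\rangle = 0 \iff X^* M = 0$, which makes the geometry transparent: the range of $X^*$ is contained in the kernel of $M$, namely the $\lambda_{\max}$-eigenspace of $\Phi_A(Y^*)$.

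Running this equivalence in both directions gives the two implications. If $(X^*,Y^*,w,v)$ solves \eqref{eq:cp1}, the complementarity equation forces $w = p_A(X^*,Y^*)$, the semidefinite constraint forces $w \ge \lambda_{\max}(\Phi_A(Y^*))$, and the trivial bound forces $w = \lambda_{\max}(\Phi_A(Y^*))$, so $X^*$ is a best response; symmetrically for $Y^*$, hence $(X^*,Y^*)$ is Nash. Conversely, from a Nash equilibrium I set $w = p_A(X^*,Y^*)$ and $v = p_B(X^*,Y^*)$ and verify that all four relations hold. Applying the argument verbatim to $p_B(X,Y) = \langle \Phi_B'(X), Y\rangle$ with $Y$ free yields the constraints involving $v$, $\Phi_B'(X)$, and the second complementarity equation, and identifies $w,v$ as the two players' payoffs. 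I do not anticipate a genuine obstacle; the only point demanding care is the bookkeeping that the scalars $w,v$, introduced as optimal values, coincide with the realized payoffs in both directions, so that the two complementarity equations carry their intended meaning rather than being vacuous.
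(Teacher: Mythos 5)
Your proof is correct, and it follows the same skeleton as the paper's: decompose the Nash condition into the two best-response conditions and characterize each best response to a fixed opponent strategy. The difference lies in the key lemma used for that characterization. The paper writes down the dual SDP $\min\{w : w\cdot I_m - \Phi_A(Y^*) \succeq 0\}$ of player one's best-response problem and invokes strong duality to conclude that optimality is equivalent to feasibility of the dual together with the complementarity equation. You instead prove the needed equivalence directly from the variational characterization $\max_{X \in \mathcal{X}} \langle X, C\rangle = \lambda_{\max}(C)$, deducing $w \ge \lambda_{\max}(\Phi_A(Y^*))$ from the semidefinite constraint, $w = p_A(X^*,Y^*)$ from the complementarity equation and $\tr(X^*)=1$, and then squeezing against the trivial bound $p_A(X^*,Y^*) \le \lambda_{\max}(\Phi_A(Y^*))$. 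This buys you a self-contained argument that does not require verifying the hypotheses of SDP strong duality (e.g.\ a Slater point for the primal-dual pair), and it makes the attainment set explicit: the maximizers are exactly the density matrices whose range lies in the top eigenspace of $\Phi_A(Y^*)$, equivalently $X^* M = 0$ for $M = w\cdot I_m - \Phi_A(Y^*)$ --- a fact the paper only extracts later, at the start of Section~\ref{se:eigenvalue-view}. The paper's route is shorter and places the result inside the duality framework that the rest of the paper builds on; yours is more elementary and already delivers the kernel/range geometry. Your attention to the bookkeeping that $w$ and $v$ coincide with the realized payoffs in both directions is exactly the point the paper handles with its closing sentence, and you handle it correctly.
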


For convenience of the reader and to set up the notation,
we recall the proof.

\begin{proof}
A point $(X^*,Y^*)$ is a Nash equilibrium of the game if and only if 
$X^*$ is a best response to $Y^*$ and vice versa, i.e.,
\begin{eqnarray*}
  X^* \in \argmax_{X \in \mathcal{X}} \langle X, \Phi_A(Y^*) \rangle \; \text{ and } \;
  Y^* \in \argmax_{Y \in \mathcal{Y}} \langle \Phi'_B(X^*), Y \rangle \, .
\end{eqnarray*}
The dual of the optimization problem
$
  \max_{X \in \mathcal{X}} \langle X, \Phi_A(Y^*) \rangle
$
is given by
\[
  \min \{ w \, : \, w \cdot I_m - \Phi_A(Y^*)
  \succeq 0, \, w \in \R \}
\]
and the dual of the optimization problem
$\max_{Y \in \mathcal{Y}} \langle \Phi'_B(X^*), Y \rangle$
is given by
\[
  \min \{ v \, : \, -\Phi'_B(X^*)
  + v \cdot I_n \succeq \ 0, \, v \in \R \} \, .
\]
By strong duality, we see that a point 
$(X^*,Y^*) \in \mathcal{X} \times \mathcal{Y}$ is a Nash equilibrium
if and only if the two semidefinite conditions and the two equations
in~\eqref{eq:cp1} are satisfied. The last two equations ensure that the payoffs to the players are 
$w$ and $v$, respectively.
\end{proof}

\subsection{The homotopy view on the Lemke-Howson
  algorithm\label{se:lemke-howson-bimatrix}}

Let $G=(A,B)$ be a bimatrix game with $m \times n$-matrices $A,B$
and denote the mixed strategy spaces by
\[
  \Delta_m \ = \ \{x \in \R^m \, : \, x \ge 0, \; \sum_{i=1}^m x_i = 1\}
  \; \text{ and } \;
  \Delta_n \ = \ \{y \in \R^n \, : \, x \ge 0, \; \sum_{i=1}^n y_i = 1\} \, .
\]

It is useful to mention the following characterization of Nash equilibria
in bimatrix games, which, in turn, we can view as a special case
of Theorem~\ref{th:nash-sdp1}.

\begin{thm}
\label{th:nash2}
The set of Nash equilibria of a bimatrix game are the solutions of
a linear complementarity problem. Namely, a point 
$(x^*,y^*) \in \Delta_m \times \Delta_n$ is a Nash equilibrium
if and only if $ A y \le w \one$ , $B^T x  \le v \one$, and
the following complementarity conditions hold
\begin{equation}
  \label{eq:cp2}
  \begin{array}{rcl}
    \sum\limits_{i=1}^m x_i (w \one - Ay)_i \ = \ 0 \, , \;
    \sum\limits_{j=1}^n y_j (v \one - B^Tx)_j \ = \ 0
  \end{array}  ,
\end{equation}
 where $w$ and $v$ are the payoffs of the first and the second player.
\end{thm}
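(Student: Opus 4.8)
The plan is to derive Theorem~\ref{th:nash2} as a direct specialization of Theorem~\ref{th:nash-sdp1}, exploiting the correspondence between diagonal density matrices and probability vectors. First I would set up the diagonal embedding: given mixed strategies $x \in \Delta_m$ and $y \in \Delta_n$, associate to them the density matrices $X = \diag(x) \in \mathcal{X}$ and $Y = \diag(y) \in \mathcal{Y}$, and choose payoff tensors $A_{ijkl}$, $B_{ijkl}$ supported on the diagonal indices $i=j$, $k=l$ so that $A_{iikk} = A_{ik}$ recovers the bimatrix entries. Under this embedding one checks that $\Phi_A(Y) = \diag(Ay)$ and $\Phi'_B(X) = \diag(B^T x)$, and that the Frobenius payoffs reduce to $p_A(X,Y) = x^T A y$ and $p_B(X,Y) = x^T B y$, so the semidefinite game literally restricts to the bimatrix game $G = (A,B)$ on the diagonal.

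With this dictionary in hand, the second step is to translate each line of~\eqref{eq:cp1} into its diagonal counterpart. The semidefinite inequality $w \cdot I_m - \Phi_A(Y) \succeq 0$ becomes $w \cdot I_m - \diag(Ay) \succeq 0$; since a diagonal matrix is positive semidefinite exactly when each diagonal entry is nonnegative, this is equivalent to $(w\one - Ay)_i \ge 0$ for all $i$, i.e.\ $Ay \le w\one$. Symmetrically the second inequality gives $B^T x \le v\one$. For the complementarity equations, the Frobenius inner product of the diagonal matrices $X = \diag(x)$ and $w\cdot I_m - \diag(Ay)$ equals $\sum_i x_i (w\one - Ay)_i$, and likewise for the $Y$-condition, so the two trace equations in~\eqref{eq:cp1} collapse to exactly the two complementarity sums in~\eqref{eq:cp2}. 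This recovers the full statement of Theorem~\ref{th:nash2}.

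The only genuine subtlety — and the step I would be most careful about — is ensuring that the argmax characterization is faithful under restriction to diagonal strategies, i.e.\ that a best response to a diagonal $Y^*$ within the full cone $\mathcal{X}$ may be taken diagonal and that no strictly better non-diagonal response exists. This follows because $p_A(X,Y^*) = \langle X, \diag(Ay^*)\rangle$ depends only on the diagonal of $X$, so $\max_{X \in \mathcal{X}}\langle X, \diag(Ay^*)\rangle = \max_{x \in \Delta_m} x^T(Ay^*)$ with the optimum attained at a diagonal $X$; the linear-programming dual of this reduced maximization is precisely $\min\{w : Ay^* \le w\one\}$, matching the diagonal reduction of the SDP dual. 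Thus the linear complementarity conditions obtained from the diagonal specialization coincide with the standard LP-complementarity characterization of Nash equilibria in bimatrix games, and strong LP duality (a special case of the strong SDP duality invoked in the proof of Theorem~\ref{th:nash-sdp1}) closes the argument.
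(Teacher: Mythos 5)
Your proposal is correct and follows exactly the route the paper intends: the paper states Theorem~\ref{th:nash2} without a separate proof, remarking only that it is a special case of Theorem~\ref{th:nash-sdp1}, and your diagonal embedding ($X=\diag(x)$, $Y=\diag(y)$, tensors supported on $i=j$, $k=l$, giving $\Phi_A(Y)=\diag(Ay)$ and $\Phi'_B(X)=\diag(B^Tx)$) is precisely the specialization being invoked. Your care about the argmax being attainable at a diagonal matrix is a detail the paper leaves implicit, and you handle it correctly.
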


In the literature, 
the indices in the complementarity conditions
are often viewed as ``labels'' attributed to strategy pairs,
see, e.g.,~\cite[Chapter~45]{von-stengel-handbook}.
Using the notation $M = \{1, \ldots, m\}$ and $N := \{m+1, \ldots, m+n\}$,
the set of labels is $M \cup N$. The indices in $M$ refer to the
$m$ terms in the first sum of~\eqref{eq:cp2} and the indices
in $N$ refer to the $n$ terms in the second sum of~\eqref{eq:cp2}.
We associate a set of labels to every strategy $x$ of the first player, 
which indicates which of the two factors 
of the summands of the complementarity conditions is zero; namely,
\[
  \{ i \in M \, : x_i = 0\} 
  \cup \{m + j \in N \ : \ (v \one - B^T x)_j = 0 \} \, .
\]
Similarly, for a strategy $y$ of the second player, we associate the 
labels
\[
  \{ i \in M \, : \, (w \one - A y)_i = 0\}
  \cup \{ m + j \in N \ : \ y_j = 0 \} \, .
\]
We can also consider the situation
where all the labels in $M$ are associated to the first player.
This corresponds to the artificial strategy $x=0$, which
does not satisfy $\sum_{i=1}^m x_i = 1$. Similarly, we can consider 
the artificial strategy $y=0$ of the second player.
The point $(0,0)$ can be regarded as an artificial equilibrium.
For the artificial strategy 0 of the first
player, always the first factor in each term of the sum is binding,
and similarly for the second player.

We assume non-degeneracy of the bimatrix game and
detail this in the next section.
The Lemke-Howson algorithm starts from the artificial equilibrium
$(0,0)$. 
We fix a strategy for one of the players, say the first, 
by picking $k \in M$.
Then, the $k$-th complementary condition becomes \emph{loose}, 
by omitting the label $k$. We can assume that $k$ refers to
a pure strategy of the first player. Formally, we look for
a solution of~\eqref{eq:cp2} in which the
term with index~$k$ is omitted. In the viewpoint of the labels,
the $k$-th label is dropped. We say that a strategy pair $(x,y)$ is 
\emph{$k$-almost completely labeled} if $(x,y)$ is a solution to the complementarity 
conditions~\eqref{eq:cp2}, where the term $k$ is omitted
from the first sum.

By considering the $k$-almost completely
labeled strategy pairs we define a transition
from the artificial equilibrium to a situation where one of 
the labels of the first player appears twice. Throughout
the algorithm, whenever a label occurs twice, it is dropped
once and this leads to a new pair of strategies where another
(possibly appearing twice) label is taken. As soon as a player 
has moved away from 0, her combinatorial choice corresponds
to a real mixed strategy. Eventually, the algorithm reaches
a pair of strategies that is completely labeled and this pair
is a Nash equilibrium.

The omission of a label can be captured in the homotopy view by
adding a bonus to the $k$-th strategy.
Let $t_0$ be a sufficiently 
large bonus so that if we add $t_0$ to the $k$-th strategy of the first 
player, then this becomes a dominant strategy for the first player.
For $t \in [0,t_0]$, let
$G(t)=(A(t),B(t))$ be the game resulting from $G$ where
we set $a_{kj}(t) = a_{kj} + t$ for $j \in \{1, \ldots, n\}$ and
all other payoff entries in $G(t)$ are taken from $G$.

For the $k$-almost completely labeled points, in the homotopy view
there exists a bonus value $t$ in the homotopy view such that
the $k$-strategy of the first player becomes a best response.
We record this in the following way (see 
\cite[Section 5.4]{govindan-wilson-2003} and \cite[Sections 4.1 and 5]{herings-peeters-2010}).

\begin{prop}
  \label{pr:almost-completely}
	A strategy pair $(x,y)$ is $k$-almost completely labeled if and only if there 
	exists some $t \ge 0$ such that $(x(t),y(t))$ is a Nash equilibrium of 
	the game $G(t)$. 
\end{prop}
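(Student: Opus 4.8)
The plan is to apply the linear complementarity characterization of Theorem~\ref{th:nash2} to the perturbed game $G(t)$ and compare the resulting conditions with the definition of a $k$-almost completely labeled pair. First I would record the effect of the bonus on the best-response vectors. Since only the $k$-th row of $A$ is modified, we have $B(t) = B$, so the second player's feasibility and complementarity conditions in $G(t)$ coincide verbatim with those in $G$. For the first player, writing $A(t) = A + t\,e_k\one^{T}$ and using $y \in \Delta_n$ (so that $\one^{T}y = 1$), one computes $A(t)y = Ay + t\,e_k$; hence $(A(t)y)_k = (Ay)_k + t$ and $(A(t)y)_i = (Ay)_i$ for $i \neq k$. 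Thus passing from $G$ to $G(t)$ merely shifts the $k$-th coordinate of the first player's payoff vector by $t$, leaving all other data untouched.

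For the direction ``$\Leftarrow$'', suppose $(x,y)$ is a Nash equilibrium of $G(t)$ for some $t \ge 0$, with payoffs $w$ and $v$. The second-player conditions transfer unchanged to $G$. For the first player, feasibility $A(t)y \le w\one$ gives $(Ay)_i \le w$ for $i \neq k$ and $(Ay)_k = (A(t)y)_k - t \le w$, so $Ay \le w\one$ holds in $G$. Moreover, since each summand of $\sum_i x_i\bigl(w-(A(t)y)_i\bigr)=0$ is nonnegative, every summand vanishes; in particular $x_i\bigl(w-(Ay)_i\bigr)=0$ for all $i \neq k$. This is exactly the first complementarity condition of $G$ with the $k$-th term omitted, so $(x,y)$ is $k$-almost completely labeled.

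For the direction ``$\Rightarrow$'', suppose $(x,y)$ is $k$-almost completely labeled with associated payoffs $w,v$, so that $Ay \le w\one$, $B^{T}x \le v\one$, the second-player complementarity holds, and $\sum_{i\neq k} x_i\bigl(w-(Ay)_i\bigr)=0$. I would then set $t := w - (Ay)_k$, which is nonnegative precisely because feasibility $(Ay)_k \le w$ is retained at the dropped index. With this choice $(A(t)y)_k = (Ay)_k + t = w$, so $A(t)y \le w\one$ holds with equality at $k$, and the first-player complementarity in $G(t)$ reads $\sum_{i\neq k} x_i\bigl(w-(Ay)_i\bigr) + x_k\,(w-w) = 0$. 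Together with the unchanged second-player conditions, Theorem~\ref{th:nash2} shows that $(x,y)$ is a Nash equilibrium of $G(t)$.

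The only real subtlety, and the step I would state most carefully, is the bookkeeping of the first player's value $w$ and the attendant nonnegativity of $t$: the content of the statement is exactly that the gap $w-(Ay)_k$ by which pure strategy $k$ fails to be a best response in $G$ equals the bonus needed to make it a best response in $G(t)$, and that this gap is nonnegative because the feasibility constraint at $k$ is kept even though the complementarity label $k$ is dropped. I would also make explicit that the correspondence is understood for genuine strategies $x \in \Delta_m$, $y \in \Delta_n$ (the artificial start $x=0$ corresponding to the limit of an arbitrarily large bonus), and that it is the normalization $\sum_j y_j = 1$ that makes the row-bonus act as a clean additive shift on the single coordinate $(A(t)y)_k$.
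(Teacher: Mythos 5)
Your proof is correct. The paper itself gives no proof of this proposition --- it only points to Govindan--Wilson and Herings--Peeters --- so there is nothing internal to compare against; your argument is exactly the standard one those references intend: the bonus acts as the additive shift $A(t)y = Ay + t\,e_k$ (using $\one^{T}y=1$), and the dropped label $k$ is traded for the slack $t = w - (Ay)_k$. You also correctly isolate the two genuine subtleties: that nonnegativity of $t$ rests on the convention that a $k$-almost completely labeled pair retains the feasibility inequality $(Ay)_k \le w$ while dropping only the complementarity term at $k$ (the paper's terse phrasing of the definition leaves this implicit, but it is forced by the best-response-polytope formulation of the labels), and that the artificial strategies $x=0$ or $y=0$ fall outside $\Delta_m \times \Delta_n$ (for $y=0$ the normalization $\one^{T}y=1$ fails), so the equivalence as stated should be read for genuine mixed strategies. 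No gaps.
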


The set of solutions of the equations and inequalities 
in Proposition~\ref{pr:almost-completely} defines a set 
$\mathcal{S} \subset (\Delta_m \cup \{0\}) \times 
(\Delta_n \cup \{0\})$ which is a finite union
of polytopes. 

For a non-degenerate game, the polytopes in $\mathcal{S}$ are at
most one-dimensional and they define a graph. Moreover, as a consequence
of the non-degeneracy definition, every vertex of the graph
has at most two adjacent edges \cite{von-stengel-handbook}.
Topologically, $\mathcal{S}$ consists of 
finitely many paths and loops. We call the union of the segments
the \emph{Lemke-Howson paths}.
We call a strategy pair $(x,y)$ an \emph{event point} if
\begin{enumerate}
\item for every $i$ we have $x_i=0$ or $(w \mathbf{1} - Ay)_i=0$ 
  and for every $j$ we have $y_j=0$ or $(v \mathbf{1} - B^T x)_j=0$, and
\item there exists an $i$ with $x_i=(w \mathbf{1} - Ay)_i=0$
  or there exists a $j$ with $y_j = (v \mathbf{1} - B^T x)_j=0$,
\end{enumerate}
where $w$ and $v$ are the payoffs of the first and second player.

In a non-degenerate game, there are only finitely many event points
and they are vertices of the graph. Moreover, the non-degeneracy
definition implies that event points have at most two
incident edges in the graph. Hence, if on the homotopy path
an event point is reached, then there is at most one way to leave it.
The fact that we can leave from a non-Nash equilibrium event point 
can be deduced from combinatorial arguments or from 
general statements on homotopies (see Section~\ref{se:lh-paths}
in a more general context).

\section{An eigenvalue view on semidefinite Nash equilibria\label{se:eigenvalue-view}}

Let $G$ be a semidefinite game and $(X,Y)$ be a Nash equilibrium of $G$.
By Theorem~\ref{th:nash-sdp1}, $(X,Y)$ satisfies the complementarity conditions 
\begin{eqnarray}
  \langle X, w \cdot I_m - \Phi_A(Y) 
    \rangle & = & 0, \label{eq:scalar-prod-cond1}  \\
  \text{ and } \ \langle Y, v \cdot I_n - \Phi'_B(X)
    \rangle & = & 0 \, .
   \label{eq:scalar-prod-cond2}
\end{eqnarray}
The inner product conditions~\eqref{eq:scalar-prod-cond1} 
and~\eqref{eq:scalar-prod-cond2} are equivalent to the matrix equations 
\begin{eqnarray*}
  X ( w \cdot I_m - \Phi_A(Y)) 
    \ = \ 0 \, \text{ and } 
  Y  (v \cdot I_n - \Phi'_B(X)) \ = \ 0 \, .
\end{eqnarray*}
Namely, it is well known (see, for example, \cite{aho-1997}) that if
two matrices $S, T \in \sym_n^+$ satisfy
$\tr(ST) = 0$, then $S^{1/2} T S^{1/2} \in \sym_n^+$ and
$\tr(S^{1/2} T S^{1/2}) = \tr(ST) = 0$, which implies
$S^{1/2} T S^{1/2} = 0$ and thus $ST = 0$.
Note that the product of two positive semidefinite matrices has 
nonnegative eigenvalues, but in general the product is not a symmetric 
matrix.

Using the abbreviations
$W=W(Y) := w \cdot I_m - \Phi_A(Y) $
and $V = V(X) :=v \cdot I_n - \Phi'_B(X)$,
we can write the system of matrix equations as
\begin{eqnarray*}
  X \cdot W(Y) \ = \ 0 \; \text{ and } \;
  Y \cdot V(X) \ = \ 0 \, .
\end{eqnarray*}

If two real $n \times n$-matrices $U,V$ 
satisfy $U V = 0$, then the sum of the multiplicities of the eigenvalue 0 
in $U$ and in $V$ is at least $n$.
This follows from Sylvester's rank inequality 
$\rank (UV) \ge \rank(U) + \rank(V) - n$, e.g., \cite{marcus-mink-1992},
hence in our case $\rank(U) + \rank(V) \le n$.

We transfer the underlying condition "$x$ has certain labels"
from the bimatrix games to an eigenvalue condition.
Since $X \cdot W = 0$, we also have $W \cdot X = (X^T \cdot W^T)^T
= (X \cdot W)^T = 0$, that is, the matrices $X$ and $W$ commute. Hence,
there exists a common system of eigenvectors (see \cite{horn-johnson-2012} or, 
in our context, \cite[Lemma~3]{aho-1997}). Thus, there exists an orthogonal matrix
$Q \in \R^{n \times n}$, with $Q^T Q = I$, such that
\begin{eqnarray*}
  X & = & Q^T \diag(\lambda_1(X), \ldots, \lambda_m(X)) Q \, , \\
  W & = & Q^T \diag(\lambda_1(W), \ldots, \lambda_m(W)) Q \, ,
\end{eqnarray*}
where $\lambda_1(X), \ldots, \lambda_m(X)$ and 
$\lambda_1(W), \ldots, \lambda_m(W)$ are the eigenvalues
of $X$ and of $W$.
In other words, the matrices are simultaneously diagonalizable.
Notice that we cannot assume that both
sequences of eigenvalues are decreasing. 
We have
\begin{equation}
  \label{eq:xw1}
  X W \ = \ Q^T \diag(\lambda_1(X) \lambda_1(W), \ldots,
    \lambda_m(X) \lambda_m(W)) Q \, .
\end{equation}

Hence, at a Nash equilibrium, $(X,Y)$, we find that
for each common eigenvector $v_i$ of $X$ and $W$, at least one of
the eigenvalues $\lambda_i(X)$ or $\lambda_i(W)$ is zero. 
A similar statement is true for $Y$ and $V$. 
These statements, combined with the degeneracy condition discussed in the sequel, lead to the conclusion that the sum of the multiplicities of the eigenvalue 0 in $X$ 
and $W$ is $m$.

\subsection*{Non-degeneracy of semidefinite games}

In the \emph{generic situation}, if $X,Y$ is a Nash equilibrium and $W$, 
$V$ are defined as above, then 
the sum of the multiplicities of 0 as an eigenvalue of $X$ and $W$
is $m$ and 
and for $Y$ and $V$
is $n$.
However, from Pataki's rank results for semidefinite programming \cite{pataki-1998}, the
situation is more complicated than in the case of linear programming in the following sense.
If a linear program is primally and dually non-degenerate,
then strict complementarity holds. For semidefinite programs
this is not true and to capture this, the theory of strict complementarity
for semidefinite programs was developed (see \cite{aho-1997}). 

\begin{definition}
Let $U, V \in \sym_n$ with $UV = 0$. We say that 
\emph{strict complementarity}
holds if
\[
  \rank U + \rank V = n \, ,
\]
i.e., if and only if for every $i \in \{1, \ldots, n\}$, exactly one of the
conditions $\lambda_i(U) = 0$ or 
$\lambda_i(V) = 0$ is satisfied. Here, we used the earlier
notation $\lambda_i(\cdot)$ from the simultaneous diagonalization.
\end{definition}

\begin{definition}
Let $(X,Y)$ be a Nash equilibrium of the semidefinite game $G$ on $\mathcal{S}_m \times \mathcal{S}_n$
and let $W=W(X)$ and $V=V(Y)$ be as defined previously. We say that the Nash equilibrium
$(X,Y)$ satisfies the \emph{strict complementarity} condition if
\[
  \rank X + \rank W = m \: \text{ and } \: \rank Y + \rank V = n \, .
\]
\end{definition}

For a bimatrix game $(A,B)$ on $\Delta_m\times \Delta_n$ we have the following 
equivalent conditions to define a non-degenerate game (see, e.g.,
\cite{von-stengel-handbook}). We use the labels $M=\{1, \ldots, m\}$
and $N=\{m+1, \ldots, n\}$ from Section~\ref{se:prelim}
and denote by
$\supp x := \{i \, : \, x_i \neq 0\}$ the support of a mixed strategy $x$. 
Every point in $\Delta_m \cup \{0\}$ and every point in 
$\Delta_n \cup \{0\}$ carries a set of
labels in $M \cup N$. For $i \in M$ and $j \in N$, set
\[
  \begin{array}{rcl}
  X(i) & = & \{x \in \Delta_m \, : \ x_i = 0 \} \, , \\ [0.4ex]
  X(m+j) & = & \{ x \in \Delta_m \, : \, j = \argmax_ {j'\in [n]}(Bx)_{j'} \} \, , \\ [0.4ex]
  Y(i) & = & \{ y \in \Delta_n \; \, : \, i = \argmax_ {i'\in [m]}(Ay)_{i'} \} \, , \\ [0.4ex]
  Y(m+j) & = & \{y \in \Delta_n \; \, : \ y_j = 0 \} \, .\\      
  \end{array}
\]
Note that for $j \in [n]$, the set 
$X(m+j)$ is the set of strategies of the first player for which the $j$-th pure strategy of the 
second player is a best response. Similarly, for $i \in [m]$, the set
$Y(i)$ is the set of strategies of the second player for which the $i$-th pure strategy of the
first player is a best response. A bimatrix game is called
\emph{non-degenerate} if one of the following equivalent conditions
is satisfied.

i) For any mixed strategy $x$ of the first player, the second player has at most
  $|\supp(x)|$ best pure responses and vice versa.
  
ii) For any $x \in \Delta_m$ with a set of labels $K \subset [m + n]$ and 
$y \in \Delta_n$ with a set of labels $L \subset [m + n]$,
  the set 
  $\bigcap_{k \in K} X(k)$ has dimension $m-|K|$ and the set
  $\bigcap_{l \in L} Y(l)$ has dimension $n-|L|$.

\medskip

The non-degeneracy notion can be seen as formalizing that certain 
intersections have the expected
dimension. Note that even for bimatrix games, non-degeneracy questions
often involve nontrivial aspects. Proving that some of the various 
non-degeneracy definitions in the literature are equivalent requires
nontrivial proofs (see \cite{von-stengel-handbook}).
From an algorithmic point of view,
even deciding whether i), or equivalently ii), is satisfied in a sparse game 
is an NP-hard problem \cite{du-2013}.
For our purposes, the following non-degeneracy condition for semidefinite games
is useful:

\begin{definition}
\label{de:nondegen}
A semidefinite game is \emph{non-degenerate} if the following two conditions are satisfied:

I) For any pair of mixed strategies $X,Y$ of rank $k_1$ and $k_2$ respectively, 
  we have $\rank V(X) \ge n-k_1$ and $\rank W(Y) \ge m-k_2$.
  
II) The intersection of the two varieties defined by $X \cdot W(Y) = 0$ and by $Y \cdot V(X) = 0$ is zero-dimensional.
\end{definition}

Here, the varieties in II) are considered as varieties over the complex
numbers. 
In a Nash equilibrium, this implies that $\rank V(X) = n-k_1$ and $\rank W(Y) = m-k_2$ and furthermore that the strategies $X$ and $Y$ must be of equal rank $k=k_1=k_2$.

\medskip

For the special case of diagonal games, which correspond to bimatrix games,
the conditions i) and I) are equivalent and
condition ii) implies II). Namely, for II) we have to consider
only $m$ equations, where the left-hand side of each equation is a product
of a variable and an affine-linear form. The complex algebraic variety is the 
union of linear varieties. Using the projective view (see, e.g.,
\cite{von-stengel-handbook}), the additional
variable can be normalized to 1, where we have to assume without loss of generality
that all entries in the payoff matrices are positive. 
Hence, in the special case of diagonal games,
our definition is equivalent to the non-degeneracy definition for bimatrix games.
Note that while condition i) is equivalent to ii) for each bimatrix game, 
condition I) does not imply II) for each semidefinite game, see 
Example \ref{ex:non-deg}. 

The two conditions I) and II) are needed for our treatment of the Lemke-Howson
paths. As a consequence of statement II), the number of Nash equilibria is finite.
Namely, the zero-dimensionality means that the variety consists of a finite number of complex points, and hence the number of (real) density matrices satisfying the 
equations is finite. 

We remark that our non-degeneracy assumptions are stronger than just requiring
the game to have a finite number of Nash equilibria. This is analogous to the common non-degeneracy notions for bimatrix games (see \cite{von-stengel-handbook}). 
Further note that earlier results of Bich and 
Fixary \cite{bich-fixary-2024} (in a more general semialgebraic setting)
imply that in the generic case, the number of Nash equilibria in a semidefinite
game is finite and odd. From the viewpoint of complexity, deciding whether
a semidefinite game in sparse encoding is non-degenerate is at least NP-hard,
since already the decision problem for the special case of diagonal games is 
an NP-hard problem.

\begin{lemma}
In a non-degenerate semidefinite game, any Nash equilibrium satisfies strict complementarity.
\end{lemma}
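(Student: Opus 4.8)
The plan is to combine Sylvester's rank inequality, which already appears in the discussion preceding Definition~\ref{de:nondegen}, with condition~I) of the non-degeneracy definition, and to show that the resulting upper and lower bounds pin down all four ranks exactly.

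First I would recall that at a Nash equilibrium $(X,Y)$ the complementarity conditions of Theorem~\ref{th:nash-sdp1} are equivalent to the matrix equations $X \cdot W(Y) = 0$ and $Y \cdot V(X) = 0$, as established above. Writing $W = W(Y)$ and $V = V(X)$, the matrices $X, W$ are symmetric $m \times m$ and $Y, V$ are symmetric $n \times n$. Applying Sylvester's rank inequality to each vanishing product yields the two upper bounds
\[
  \rank X + \rank W \le m \quad \text{and} \quad \rank Y + \rank V \le n .
\]

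Next I would set $k_1 = \rank X$ and $k_2 = \rank Y$ and invoke condition~I) of Definition~\ref{de:nondegen}, which gives $\rank V \ge n - k_1$ and $\rank W \ge m - k_2$, that is, the two lower bounds $\rank X + \rank V \ge n$ and $\rank Y + \rank W \ge m$. The key observation is that the sum of the two upper bounds and the sum of the two lower bounds involve exactly the same quantity $\rank X + \rank W + \rank Y + \rank V$: the upper bounds force it to be at most $m+n$, while the lower bounds force it to be at least $m+n$. Hence the total equals $m+n$, and since each of the two upper bounds is individually at most its target, equality must hold in both simultaneously, giving $\rank X + \rank W = m$ and $\rank Y + \rank V = n$. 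This is precisely the strict complementarity condition.

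I expect the only real subtlety to be the bookkeeping: keeping track of which matrices live in $\sym_m$ versus $\sym_n$ and matching $k_1, k_2$ to the correct bound, since $V(X)$ is an $n \times n$ matrix whose rank is bounded below in terms of $\rank X = k_1$ rather than $\rank Y$. It is worth highlighting that condition~II) of Definition~\ref{de:nondegen} plays no role here; only condition~I) together with Sylvester's inequality is needed. An equivalent route would be to first deduce $k_1 = k_2$ by cross-comparing the upper and lower bounds on $\rank V$ (respectively $\rank W$) and then substitute back, but the symmetric summation argument above bypasses that intermediate step.
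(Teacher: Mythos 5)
Your proof is correct and follows essentially the same route as the paper's: the upper bounds $\rank X+\rank W\le m$ and $\rank Y+\rank V\le n$ come from Sylvester's rank inequality applied to the vanishing products, and the matching lower bounds come from condition~I). The only difference is presentational --- the paper first fixes $\rank X=\rank Y=k$ (using the observation made right after Definition~\ref{de:nondegen}) and then pins down $\rank W=m-k$ and $\rank V=n-k$, whereas your summation argument obtains both equalities in one step without assuming equal ranks up front; both versions are sound.
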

\begin{proof}
Consider a Nash equilibrium $(X,Y)$ in a non-degenerate semidefinite game. Let $W=W(Y)$ and $V=V(X)$ be defined as above and let $\rank X = \rank Y = k$. Since $W\cdot X = 0$, we know that $\rank W\le m-k$. Furthermore, any matrix $X'$ that satisfies $W \cdot X'=0$ is a best response to $Y$ by construction. Since the game is non-degenerate any such matrix must be of rank at most $k$ and therefore $\rank W\ge m-k$. Therefore, $\rank W = m-k$ and likewise $\rank V = n-k$.
\end{proof}

\begin{example}\label{ex:five_NE}
a) For given $i,j \in \{1, \ldots, m\}$, denote by $A_{ij..}$ the 
slice $(A_{ijrs})_{1 \le r,s \le n}$ of the payoff tensor $A$.
Consider the semidefinite game given by
\[
  A_{11..} = \left( \begin{array}{cc}
    1 & 0 \\ 0 & 0
  \end{array} \right), \;
    A_{12..} = \left( \begin{array}{cc}
    0 & c \\ c & 0
  \end{array} \right), \;
    A_{21..} = \left( \begin{array}{cc}
    0 & c \\ c & 0
  \end{array} \right), \;
    A_{22..} = \left( \begin{array}{cc}
    0 & 0 \\ 0 & 1
  \end{array} \right)
\]
with some constant $c > \frac{1}{2}$, and $B_{ij..} = A_{ij..}$ for $1 \le i, j \le 2$. There are five
Nash equilibria (see \cite{itt-2024}):
\begin{eqnarray*}
  && X = Y = \begin{pmatrix} 1 & 0 \\ 0 & 0 \end{pmatrix}, \;
  X = Y = \begin{pmatrix} 0 & 0 \\ 0 & 1 \end{pmatrix}, \;
  X = Y =  \begin{pmatrix} 1/2 & 0 \\ 0 & 1/2 \end{pmatrix}, \\
  & & X=Y= \begin{pmatrix} 1/2 & 1/2 \\ 1/2 & 1/2 \end{pmatrix}
  \; \text{ and } \;
  X=Y = \begin{pmatrix} 1/2 & -1/2 \\ -1/2 & 1/2 \end{pmatrix} \, .
\end{eqnarray*}
All of these five Nash equilibria are strict complementary.

\smallskip

b) Now consider the variation where $a_{2222}$ and $b_{2222}$ are replaced by
zero, i.e.,
\[
  A_{11..} = \left( \begin{array}{cc}
    1 & 0 \\ 0 & 0
  \end{array} \right), \;
    A_{12..} = \left( \begin{array}{cc}
    0 & c \\ c & 0
  \end{array} \right), \;
    A_{21..} = \left( \begin{array}{cc}
    0 & c \\ c & 0
  \end{array} \right), \;
    A_{22..} = \left( \begin{array}{cc}
    0 & 0 \\ 0 & 0
  \end{array} \right)
\]
with $c > 1/2$, and $B_{ij..} = A_{ij..}$ for $1 \le i, j \le 2$. Then the payoff of each player is
$
  p(X,Y) \ = \ x_{11} y_{11} + 4 c x_{12} y_{12} \, .
$
Since $c > 1/2$, it is possible that the payoffs of each player 
become larger than 1.
For the Nash equilibria $(X,Y)$ with $x_{12}=0$, we obtain
\begin{eqnarray*}
  && X=Y=\left( \begin{array}{cc}
    1 & 0 \\
    0 & 0
  \end{array} \right), \;
  \text{ where }
  W=V=\left( \begin{array}{cc}
  0 & 0 \\
  0 & 0
  \end{array} \right) , \\
  & \text{as well as} & X =Y=\left( \begin{array}{cc}
  0 & 0 \\
  0 & 1
  \end{array} \right), \;
  \text{ where }
  W=V=\left( \begin{array}{cc}
  0 & 0 \\
  0 & 0
  \end{array} \right),
 \end{eqnarray*}
and thus these two Nash equilibria are not strictly complementary.

In the case $x_{12} \neq 0$, we can assume positive signs for the non-diagonal elements of
both players as well as $x_{12} = \sqrt{x_{11} x_{22}}$ and $y_{12} = \sqrt{y_{11} y_{22}}$.
Hence, the payoffs are
$
  p(X,Y) \ = \ x_{11} y_{11} + 4 c \sqrt{x_{11}(1-x_{11})} \sqrt{y_{11} (1-y_{11})}.
$
In a Nash equilibrium, the partial derivatives
\begin{align*}
  p_{x_{11}} = & \ y_{11} +
  \frac{2 c \sqrt{y_{11} (1-y_{11})} (1-2 x_{11})}{\sqrt{x_{11} (1-x_{11})}}, \quad
  p_{y_{11}} = \ & x_{11} +
  \frac{2 c \sqrt{x_{11} (1-x_{11})} (1-2 y_{11})}{\sqrt{y_{11} (1-y_{11})}}
\end{align*}
of $p(X,Y)$ vanish. This system gives exactly
one solution, $x_{11} = y_{11} = \frac{2c}{4c-1}$.
Altogether, in the case $x_{12} \neq 0$,
we obtain the two Nash equilibria
\[
  X = Y = \begin{pmatrix} 
    \frac{2c}{4c-1} & \frac{\sqrt{2c(2c-1)}}{4c-1} \\
    \frac{\sqrt{2c(2c-1)}}{4c-1} & \frac{2c-1}{4c-1}
  \end{pmatrix}
  \; \text{ and } \;
   X = Y = \begin{pmatrix} 
    \frac{2c}{4c-1} & -\frac{\sqrt{2c(2c-1)}}{4c-1} \\
    -\frac{\sqrt{2c(2c-1)}}{4c-1} & \frac{2c-1}{4c-1}
  \end{pmatrix}
\]
which are both strict complementary.

Note that the total number of Nash equilibria of the game is not odd, but it is even. To illuminate this degeneracy, we note that the related bimatrix game with payoff matrices $\begin{pmatrix} 1 & 0 \\ 0 & 0 \end{pmatrix}$,
$\begin{pmatrix} 1 & 0 \\ 0 & 0 \end{pmatrix}$ has an even number of Nash equilibria as well, namely two.
\end{example} 

\begin{example}\label{ex:non-deg}
Consider the semidefinite game given by $\Phi_A(Y)=Y$ and $\Phi_B'(X)=X$. This is exactly the previous Example \ref{ex:five_NE} a) with $c=\frac{1}{2}$.
This game satisfies the non-degeneracy condition I) but not II). Indeed, for any $t\in [0,1]$ the pair $$X=Y=\begin{pmatrix}
t & \sqrt{t(1-t)}\\
\sqrt{t(1-t)} & 1-t
\end{pmatrix}$$ forms a Nash equilibrium.
The corresponding matrices $$W(Y)=V(X)=\begin{pmatrix}
1-t & -\sqrt{t(1-t)}\\
-\sqrt{t(1-t)} & t
\end{pmatrix}  $$ are of rank one.
\end{example}

Nash equilibria satisfying strict complementarity provide a good situation.
Indeed, we assume
that for all but finitely many $t$ (were $t \ge 0$), the Nash equilibria in the game 
$G(t)$ satisfy strict complementarity. The exceptional values for $t$ induce
the event points.
For a matrix $M$, denote by $\det_{k} M$ the vector of all $k \times k$-minors of $M$.

\begin{lemma}
\label{le:equations1}
Let $(X(t),Y(t))$ be an event point which is not a Nash equilibrium. Then
there exists some $t > 0$ such that
\begin{enumerate}
\item $(X(t),Y(t))$ is satisfies the equations.
$X(t) \cdot W(Y(t))  =  0$ \text{ and } $Y(t) \cdot V(X(t)) = 0$.
\item There exists some $k \in \{1, \ldots, m\}$ with
\[
  {\det}_k(X(t)) = 0, \, {\det}_{m-k+1}(W(Y(t))) = 0
\]
or there exists some $l \in \{1, \ldots, n\}$ with
\[
  {\det}_{n-l+1}(Y(t)) = 0, \, {\det}_{l}(V(X(t))) = 0 \, .
\]
\end{enumerate}
\end{lemma}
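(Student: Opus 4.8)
The plan is to unwind the two defining properties of an event point---complementarity and the failure of strict complementarity---and to translate them into the matrix equations of part~(1) and the determinantal vanishing of part~(2), using the simultaneous-diagonalization picture from Section~\ref{se:eigenvalue-view}.

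For part~(1), I would first invoke the semidefinite analogue of Proposition~\ref{pr:almost-completely}: an event point on the homotopy path is a Nash equilibrium of the perturbed game $G(t)$ for a suitable parameter $t$, and since the point is assumed not to be a Nash equilibrium of $G=G(0)$, this parameter must satisfy $t>0$. Applying Theorem~\ref{th:nash-sdp1} to $G(t)$ yields the scalar complementarity conditions $\langle X(t),W(Y(t))\rangle=0$ and $\langle Y(t),V(X(t))\rangle=0$, where $W$ and $V$ are formed from the payoff data of $G(t)$. Since $X(t),W(Y(t))\in\sym_m^+$ and $Y(t),V(X(t))\in\sym_n^+$, the fact recalled in Section~\ref{se:eigenvalue-view} (following \cite{aho-1997})---that $S,T\succeq 0$ with $\tr(ST)=0$ forces $ST=0$---upgrades these scalar identities to the matrix equations $X(t)\,W(Y(t))=0$ and $Y(t)\,V(X(t))=0$.

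For part~(2), the key observation is that an event point is precisely a point at which strict complementarity breaks down, so (in the first of the two symmetric cases) $\rank X(t)+\rank W(Y(t))\le m-1$. This is where the eigenvalue view does the real work: because $X(t)$ and $W(Y(t))$ commute and are simultaneously diagonalizable (Section~\ref{se:eigenvalue-view}), the existence of a common eigenvector with both eigenvalues equal to zero is equivalent to this strict rank drop below $m$. Writing $r=\rank X(t)$ and $s=\rank W(Y(t))$ with $r+s\le m-1$, I would choose $k=r+1$: then $\rank X(t)=r<k$ makes every $k\times k$ minor vanish, so ${\det}_k(X(t))=0$, while $m-k+1=m-r$ and $s\le m-r-1<m-r$ give ${\det}_{m-k+1}(W(Y(t)))=0$. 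The case in which strict complementarity fails for the pair $(Y,V)$ is handled symmetrically and produces the index $l$.

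The only routine ingredient is the equivalence ${\det}_p(M)=0\iff\rank M<p$, i.e.\ the identification of a vanishing minor vector with a rank deficiency. The genuine content, and the step I would treat most carefully, is the reduction of the eigenvalue-degeneracy defining an event point to the numerical inequality $\rank X+\rank W\le m-1$ via the commuting/simultaneous-diagonalization structure; once this is secured, the admissible choice of $k$ (resp.\ $l$) is forced and the determinantal identities are immediate. I expect the remaining care to lie in aligning the homotopy bonus with the event-point definition so that $W,V$ are correctly those of $G(t)$, and in confirming $t>0$ from the hypothesis that the point is not an equilibrium of $G$.
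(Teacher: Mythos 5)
Your proposal is correct and follows essentially the same route as the paper: the failure of strict complementarity at an event point, read through the simultaneous-diagonalization picture, forces $\rank X(t)+\rank W(Y(t))\le m-1$ (or the symmetric condition for $Y,V$), which is then converted into vanishing minor vectors. You merely make explicit two things the paper leaves implicit---the homotopy argument yielding part~(1) with $t>0$, and the concrete choice $k=\rank X(t)+1$---both of which are handled correctly.
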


\begin{proof}
Assume that the event point is not a Nash equilibrium. Then
in every product of two corresponding eigenvalues, at least
one of the is zero, and there exists a product where both of
the eigenvalues are zero. Hence, 
$\rank X(t) + \rank W(Y(t)) < m -1$ or 
$\rank Y(t) + \rank V((X)) < n - 1$. This translates into
the determinantal conditions of the statement of the lemma.
\end{proof}

\section{Lemke-Howson paths\label{se:lh-paths}}

The framework of Lemke-Howson paths for bimatrix games presented in Section~\ref{se:prelim}
generalizes to semidefinite games. We begin from the homotopy view described
in Section~\ref{se:prelim}. Choosing the $k$-th strategy in the bimatrix setting
carries over to choosing the $k$-th \emph{diagonal strategy} for some $k$ and 
adding the bonus means to increase the payoff entries
$a_{kkrr}$ by some bonus value $t$ for every $r$.
That is, every entry of the slice $(A_{kkrs})_{1 \le r,s \le n}$ of the payoff tensor $A$ 
is increased by the bonus value $t$.
The homotopy is based on the following general result from
Mas-Colell \cite{mascolell-1974}, see also Herings and Peeters
\cite{herings-peeters-2010}. 

\begin{thm}
\label{th:general-homotopy}
Let $C \neq \emptyset$ be a compact, convex subset of $\R^d$
and let $H:[0,1] \times C \to C$ be an upper hemicontinuous correspondence
which is nonempty and convex-valued. Further, let
\[
  F_H \ = \ \{(\lambda,x) \in [0,1] \times C \, : \ x \in H(\lambda,x)\}
\]
be the set of fixed points of $H$. Then, $F_H$ contains a connected
set $G_H$ such that $(\{0\} \times C) \cap G_H \not= \emptyset$
and $(\{1\} \times C) \cap G_H \neq \emptyset$.
\end{thm}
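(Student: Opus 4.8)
The statement is a parametric (homotopy) fixed-point theorem: it upgrades Kakutani's theorem, which for each fixed $\lambda$ guarantees a fixed point of $H(\lambda,\cdot)$, to the assertion that the whole fixed-point set $F_H$ contains one connected piece joining the slice $\lambda=0$ to the slice $\lambda=1$. The plan is to prove it in three stages: reduce to a single-valued continuous map, establish the single-valued case by a fixed-point-index argument, and then return to correspondences by approximation together with a limiting argument from continuum theory.

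\emph{Single-valued case.} First suppose $H=f$ is a continuous function $f:[0,1]\times C\to C$; since $C$ is compact convex it is homeomorphic (inside its affine hull) to a closed ball, so I may work with the fixed-point index, equivalently the Brouwer degree of $x\mapsto x-f(\lambda,x)$. The fixed-point set $S=\{(\lambda,x):x=f(\lambda,x)\}$ is a compact subset of $[0,1]\times C$, and its slices $S_0$, $S_1$ over $\lambda=0,1$ are nonempty by Brouwer's theorem. Suppose, for contradiction, that no connected component of $S$ meets both $S_0$ and $S_1$. A standard separation lemma for compact Hausdorff spaces (where components coincide with quasicomponents) then yields a partition $S=P\sqcup Q$ into disjoint compact sets with $S_0\subseteq P$, $S_1\subseteq Q$; by normality I enclose them in open sets $U\supseteq P$, $V\supseteq Q$ with $\overline U\cap\overline V=\emptyset$, so that $f(\lambda,\cdot)$ has no fixed point on the moving boundaries $\partial U_\lambda$ for any $\lambda$, where $U_\lambda$ denotes the $\lambda$-slice of $U$. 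Homotopy invariance of the index then makes $\mathrm{ind}(f(\lambda,\cdot),U_\lambda)$ constant in $\lambda$. At $\lambda=0$ all fixed points lie in $U_0$, so by excision this index equals $\mathrm{ind}(f(0,\cdot),C)=1$, whereas at $\lambda=1$ all fixed points lie in $V_1$, forcing the index over $U_1$ to be $0$. This contradiction proves the single-valued case.

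\emph{Correspondence case.} For the general upper hemicontinuous, nonempty-convex-valued $H$ (whose values I take to be compact, so that $H$ has closed graph) I invoke the approximate-selection theorem for such correspondences: for each $\varepsilon>0$ there is a continuous $f_\varepsilon:[0,1]\times C\to C$ whose graph lies in the $\varepsilon$-neighborhood of $\mathrm{graph}(H)$. The single-valued case then produces a continuum $K_\varepsilon\subseteq F_{f_\varepsilon}$ meeting both end-slices. Since the hyperspace of subcontinua of the compact metric space $[0,1]\times C$ is itself compact in the Hausdorff metric, and Hausdorff limits of continua are continua, a sequence $\varepsilon_j\to 0$ admits $K_{\varepsilon_j}\to K$ with $K$ a continuum. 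The limit $K$ still meets $\{0\}\times C$ and $\{1\}\times C$, because each $K_{\varepsilon_j}$ does and these slices are closed; and $K\subseteq F_H$, because every point of $K$ is a limit of points $(\lambda_j,x_j)$ with $x_j=f_{\varepsilon_j}(\lambda_j,x_j)$, hence within $\varepsilon_j$ of $\mathrm{graph}(H)$, so the closed graph of $H$ forces the limit into $F_H$. Taking $G_H=K$ finishes the proof.

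\emph{Main obstacle.} The delicate part is the single-valued case: one must combine the purely topological separation lemma (that the failure of the desired connection forces a clopen partition separating the two ends) with the index computation, while keeping the fixed points away from the moving boundaries $\partial U_\lambda$ so that homotopy invariance genuinely applies. A second subtlety is the passage to the limit, where connectedness is not automatically preserved under set limits; here one truly needs the compactness of the hyperspace of continua together with the closed-graph property of $H$ to land the limit continuum inside $F_H$.
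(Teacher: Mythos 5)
The paper does not prove this statement at all: it is quoted verbatim from Mas-Colell's 1974 continuation theorem (with Herings--Peeters as a secondary reference), so there is no in-paper proof to compare against. Your reconstruction is essentially the standard proof from that literature and I find it correct: the single-valued case is Browder's continuation theorem, proved exactly as you do via the separation lemma for compact Hausdorff spaces (components equal quasicomponents, hence a clopen partition $S = P \sqcup Q$ isolating the two end-slices), followed by the generalized homotopy invariance of the fixed-point index over the tube $U$, with the normalization $\mathrm{ind}(f(0,\cdot),C)=1$ against $\mathrm{ind}(f(1,\cdot),U_1)=0$; the multivalued case then follows from Cellina-type approximate selections, compactness of the hyperspace of subcontinua of $[0,1]\times C$ in the Hausdorff metric, and the closed-graph property of $H$. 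One point deserves emphasis: as stated in the paper, $H$ is only assumed nonempty- and convex-valued, which is not enough for the closed-graph step (and hence for $K\subseteq F_H$) -- you correctly noticed this and added compactness of the values, which is the hypothesis under which the theorem is actually proved in the cited sources and under which it is applied here (best-response correspondences of semidefinite games are compact-valued). The two subtleties you flag at the end -- keeping fixed points off the moving boundaries $\partial U_\lambda$, and preserving connectedness in the Hausdorff limit -- are indeed the genuine pressure points of the argument, and your treatment of both is sound.
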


For a degeneracy discussion, see the consideration 
after \cite[Theorem 2]{herings-peeters-2010}.
Further, 
as described in \cite{herings-peeters-2010} in the generic case, we
obtain a finite collection (in the topological sense) of arcs and loops.

For semidefinite games, we consider the parameter values 
$[t_0, 0]$ for some sufficiently large $t_0$, and we can map that
interval to $[0,1]$. The correspondence 
$H : [t_0,t] \times (\mathcal{X} \times \mathcal{Y}) 
\to (\mathcal{X} \times \mathcal{Y})$ is given by
\[
  H(t,(X,Y)) \ = \ \beta_1(t,(X,Y)) \times \beta_2(t,(X,Y))
\]
where 
\begin{eqnarray*}
\beta_1(t,(X,Y)) 
& = & \argmax_{\bar{X} \in \mathcal{X}} \langle \bar{X}, \Phi_A(Y)\rangle \, , \\
\beta_2(t,(X,Y))
& = & \argmax_{\bar{Y} \in \mathcal{Y}} \langle \Phi_B'(X), 
\bar{Y} \rangle
\end{eqnarray*}
are the best response correspondences of the two players.

\begin{lemma}
$H$ is upper hemicontinuous.
\end{lemma}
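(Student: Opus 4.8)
The plan is to show that $H(t,(X,Y)) = \beta_1(t,(X,Y)) \times \beta_2(t,(X,Y))$ is upper hemicontinuous by establishing the property for each factor separately, since a Cartesian product of upper hemicontinuous correspondences into compact spaces is again upper hemicontinuous. By symmetry it suffices to treat $\beta_1$. First I would observe that $\beta_1(t,(X,Y)) = \argmax_{\bar{X} \in \mathcal{X}} \langle \bar{X}, \Phi_A(Y) \rangle$ does not actually depend on $t$ in the two-player formulation, and that the objective $(\bar X, Y) \mapsto \langle \bar X, \Phi_A(Y) \rangle$ is jointly continuous (indeed bilinear, since $\Phi_A$ is linear in $Y$ by its defining formula $\Phi_A(Y)_{ij} = \sum_{k,l} A_{ijkl} Y_{kl}$) and linear in $\bar X$. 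The feasible set $\mathcal{X} = \{X \in \sym_m^+ : \tr(X) = 1\}$ is a fixed, nonempty, compact, convex set independent of the parameters.

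The key tool is Berge's maximum theorem. I would set up the constant compact-valued constraint correspondence $(t,(X,Y)) \mapsto \mathcal{X}$, which is trivially continuous (both upper and lower hemicontinuous) because it is constant with compact values, and apply the maximum theorem to the continuous objective $f(\bar X; t, X, Y) = \langle \bar X, \Phi_A(Y) \rangle$. Berge's theorem then yields directly that the argmax correspondence $\beta_1$ is upper hemicontinuous with nonempty compact values. The nonemptiness follows from continuity of $f$ in $\bar X$ and compactness of $\mathcal{X}$ (Weierstrass), and convex-valuedness (useful for Theorem~\ref{th:general-homotopy}) follows because $\beta_1$ is the set of maximizers of a linear functional over a convex set, hence a face of $\mathcal{X}$.

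An equivalent hands-on argument, which I would present if one prefers not to invoke Berge abstractly, uses the closed-graph characterization of upper hemicontinuity for correspondences with values in the compact set $\mathcal{X}$: take sequences $(t_\nu, X_\nu, Y_\nu) \to (t, X, Y)$ and $\bar X_\nu \in \beta_1(t_\nu,(X_\nu,Y_\nu))$ with $\bar X_\nu \to \bar X_\infty$; the optimality inequality $\langle \bar X_\nu, \Phi_A(Y_\nu)\rangle \ge \langle Z, \Phi_A(Y_\nu)\rangle$ for all $Z \in \mathcal{X}$ passes to the limit by continuity of the Frobenius inner product and of $\Phi_A$, giving $\bar X_\infty \in \beta_1(t,(X,Y))$. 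This shows the graph is closed, and since the range $\mathcal{X}$ is compact, closed graph is equivalent to upper hemicontinuity.

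The main obstacle is essentially bookkeeping rather than mathematical depth: one must be careful that upper hemicontinuity for correspondences into a compact space is correctly equated with the closed-graph property (this equivalence fails without compactness of the range), and that the product of two upper hemicontinuous correspondences inherits the property. Both points are standard, so the proof is short; the substance lies in verifying the continuity of the bilinear payoff and the compactness of the density-matrix strategy spaces, which are immediate from the definitions in Section~\ref{se:prelim}.
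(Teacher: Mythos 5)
Your proof is correct, and in substance it is the same standard argument the paper uses: the paper simply cites the sequential characterization of upper hemicontinuity for compact-valued correspondences (Aliprantis--Border, Cor.~17.17) and asserts that the best-response correspondence satisfies it, whereas you actually carry out the verification. Your closed-graph argument (pass the optimality inequality $\langle \bar X_\nu, \Phi_A(Y_\nu)\rangle \ge \langle Z, \Phi_A(Y_\nu)\rangle$ to the limit, then use compactness of the range to upgrade closed graph to upper hemicontinuity) is exactly the content the paper leaves implicit, and your Berge's-maximum-theorem packaging with a constant constraint correspondence is a clean alternative wrapper for the same computation; you also correctly supply the product step and the convexity of the argmax needed for Theorem~\ref{th:general-homotopy}. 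One small caution: your remark that $\beta_1$ ``does not depend on $t$'' follows the paper's literal notation but misses that in the homotopy the payoff tensor is $A(t)$ with $a_{kkrr}(t)=a_{kkrr}+t$, so $\Phi_A$ does depend on $t$; since this dependence is affine, the objective remains jointly continuous in $(t,\bar X,Y)$ and your argument goes through unchanged, but the dependence should be acknowledged rather than dismissed.
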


\begin{proof}
Recall that a correspondence 
$F$ between two topological spaces is called \emph{com\-pact-valued}
if for every $x \in A$ the set $F(x)$ is compact. 
A compact-valued correspondence is upper-hemicontinuous at
a point $a \in A$ if and only if for every sequence
$(a_n) \to a$ and every sequence $(b_n) \in B$ 
with $b_n \in F(a_n)$ for all $n$, there exists a
converging subsequence of $(b_n)$ whose limit point $b$
is in $F(a)$ (see, e.g., \cite[Corollary 17.17]{aliprantis-border-2006}). 
Since the best response correspondence is compact-valued, 
the characterization implies that $H$ is upper hemicontinuous.
\end{proof}

Let $t_0$ be the sufficiently large bonus added to the $k$-th diagonal
strategy of player~1. That is, we set $a_{kkrr}(t) = a_{kkrr} + t$ for
$r \in \{1, \ldots, n\}$.
For $t \in [0,t_0]$, we consider the set of the game $G(t)$, where $G(t)$ is
defined by adding the bonus $t$ to the $k$-th strategy of player~1.

As a generalization of the Lemke-Howson algorithm, we describe a
sequence of (nonlinear) curve segments and explain how the combinatorics
changes in the endpoints of the curve segments.
For each $t$, we consider the complementarity conditions
$X(t) W(t) = 0$ and $Y(t) V(t) = 0$. Considering the solution 
from Theorem~\ref{th:general-homotopy}, as $t$ varies, the
eigenvectors continuously vary with $t$. In generalization of
the Lemke-Howson algorithm for bimatrix games, an event point 
occurs if one of the eigenvalues additionally reaches 0 or if $t$ reaches zero.
In the case of reaching $t=0$, we have found a Nash equilibrium for our
original problem.
Note that, when interpreting the parameter $t$ as time, tracing
the homotopy might require us to consider larger values of $t$
(see \cite{herings-peeters-2010}).

At the parameter value $t_0$, player 1 plays the $k$-th 
diagonal strategy. Player 2 plays the best response $s$ to the 
$k$-th diagonal strategy of the first player. Now, let $t$ decrease from
$t_0$. As long as $t$ is still sufficiently large, player~1 still plays the
$k$-th diagonal strategy and thus player~2 still plays $s$. Hence,
at the beginning of the homotopy, as long as $t$ is sufficiently large,
the strategies of both players stay constant.

If an additional eigenvalue reaches zero, say, for example, in the local
situation $\lambda_i(X(t)) = 0$ on a branch, an eigenvalue $\lambda_i(W(\bar{t}))$ 
becomes additionally zero for some value $\bar{t}$, 
then an event point occurs. In this point, 
strict complementarity is not satisfied. In order to continue the path,
non-negativity of the eigenvalues must be preserved, so that for $t$ infinitesimally
larger than $\bar{t}$, the eigenvalue $\lambda_i(W(t))$ will stay zero and
the eigenvalue $\lambda_i(X(t))$ will become positive. Note that the joint
eigenvectors will continuously change with $t$.

To concentrate on the main ideas, we consider an appropriate
non-degeneracy. We assume that at every event point,
at most two curve branches leave from that event point.
A finiteness result on the number of event points and thus on the number
of curve branches can be obtained as a consequence of Lemma~\ref{le:equations1}.

\begin{cor}
\label{cor:finiteness}
Assume that for all $k \in \{1, \ldots, m\}$ and $l \in \{1, \ldots, n\}$
the systems
\[
 {\det}_k(X(t)) = 0, \, {\det}_{m-k+1}(W(Y(t))) = 0
\]
and
\[
  {\det}_{n-l+1}(Y(t)) = 0, \, {\det}_{l}(V(X(t))) = 0 \,
\]
have a finite number of real solutions 
$(X,Y,w,v,t) \in \mathcal{X} \times \mathcal{Y} \times \R \times \R \times \R_{>0}$. 
Then, the number of event points is finite.
\end{cor}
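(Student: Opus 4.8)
The plan is to derive the corollary as a counting consequence of Lemma~\ref{le:equations1}, splitting the event points into those that are Nash equilibria and those that are not. First I would record that every event point, viewed along the homotopy, is a concrete tuple $(X,Y,w,v,t)$: the strategy pair $(X(t),Y(t))$ comes equipped with the associated payoffs $w,v$ and the parameter value $t$, so it is a point of $\mathcal{X}\times\mathcal{Y}\times\R\times\R\times\R_{\ge 0}$. An event point either occurs at $t=0$, in which case it is a Nash equilibrium of the original game, or at some $t>0$, in which case it is \emph{not} a Nash equilibrium.

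For the event points with $t>0$, I would invoke Lemma~\ref{le:equations1} directly. It asserts that such a point satisfies either the first determinantal system for some $k\in\{1,\ldots,m\}$, or the second determinantal system for some $l\in\{1,\ldots,n\}$. These are \emph{exactly} the systems appearing in the hypothesis, so each non-equilibrium event point lies in the set
\[
  \bigcup_{k=1}^m \mathcal{E}_k \ \cup \ \bigcup_{l=1}^n \mathcal{E}_l',
\]
where $\mathcal{E}_k$ is the (real) solution set of the $k$-th system of the first type and $\mathcal{E}_l'$ that of the $l$-th system of the second type, both intersected with $\mathcal{X}\times\mathcal{Y}\times\R\times\R\times\R_{>0}$. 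By hypothesis each $\mathcal{E}_k$ and each $\mathcal{E}_l'$ is finite, and the index ranges $\{1,\ldots,m\}$ and $\{1,\ldots,n\}$ are finite; hence this is a finite union of finite sets and is therefore finite.

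For the event points at $t=0$, which are Nash equilibria, I would appeal to non-degeneracy condition~II): the variety defined by $X\cdot W(Y)=0$ and $Y\cdot V(X)=0$ is zero-dimensional, so it contains only finitely many complex points and in particular only finitely many real density-matrix solutions. As already observed in the discussion following Definition~\ref{de:nondegen}, this forces the number of Nash equilibria to be finite. Combining the two classes gives that the total number of event points is finite.

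The substantive input is entirely carried by Lemma~\ref{le:equations1}, so the only step requiring any care — and the one I expect to be the (modest) obstacle — is the bookkeeping identification: one must check that the determinantal conditions produced by Lemma~\ref{le:equations1} coincide verbatim with the hypothesised systems once the event point's payoffs $w,v$ and parameter $t$ are substituted into $W(Y(t))$ and $V(X(t))$. Note that the complementarity equations in part~(1) of Lemma~\ref{le:equations1} are not even needed for the count; it suffices that an event point is \emph{contained} in the solution set of one of the systems, after which the elementary fact that a finite union of finite sets is finite closes the argument.
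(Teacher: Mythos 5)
Your proposal is correct and follows essentially the same route as the paper: the paper's proof is a single sentence invoking Lemma~\ref{le:equations1} to place every non-equilibrium event point in one of the hypothesised finite solution sets, and you simply spell out the resulting finite-union-of-finite-sets count. Your additional paragraph handling the $t=0$ (Nash equilibrium) event points via non-degeneracy condition~II) is a reasonable piece of completeness that the paper's one-line proof leaves implicit, but it does not change the substance of the argument.
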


\begin{proof}
By Lemma~\ref{le:equations1}, for every event point which is not a Nash equilibrium
there exists some $t > 0$ such that one of the systems has a real solution.
\end{proof}

In generalization of the bimatrix situation, the event points together
with the nonlinear curve segments connecting the event points define
a combinatorial graph. Since already in the case of $n \times n$ bimatrix games,
Lemke-Howson paths can be exponentially long (see \cite{savani-von-stengel-2006}),
there cannot be a polynomial bound on the number of event points for semidefinite games.

\section{Tracing the Lemke-Howson paths\label{se:tracing}}

To trace the curve segments in the regions of $t$ free of event points,
we exploit the fact that these curve segments are smooth
and we can employ a predictor-corrector method, based on the implicit function theorem and Newton's operator
to trace them.
There is a lot of related work on these subjects
as this problem appears in different contexts.
For example, in the numerical algebraic geometry community,
curve tracing is the main operation of homotopy continuation
algorithms, e.g., \cite{WamSom-numalg-bk-11,bshw-bertini-13}.
Also, they are commonly treated in the community of geometric modeling, e.g., 
\cite{FauMic-trace-07,Georg-trace-81,Taubin-curves-93}.
Starting from a regular point on the curve, say $p$, we compute a vector tangent, say $t$, and we compute the point $p + \eta \tfrac{t}{\|t\|}$, 
where $\eta$ is the step size; this is the prediction phase. 
If $\eta$ is small, then the new point is close to the curve
and we correct it using a variant of Newton's method.
We should also note that other methods also exist, especially for real curves, e.g.~\cite{Arnon-curve-83,Taubin-curves-93}.

The event points correspond to singularities of the curve.
Many branches can go through an event point.
To understand the local geometry around the event points
and to select the correct branch for the Lemke-Howson path
we employ the fact that locally, that is, around the singularity, 
we can parametrize the curve branches using Puiseux series, e.g., \cite{bpr-2006,Walker-curves-78}. 
This computation raises many numerical challenges, e.g., 
\cite{Duval-Q-Puiseux-89,PotRab-Qp-Puiseux-11,verschelde-2008},
and the same is true for the numerical tracing of the Lemke-Howson paths
close and so not close to the event points.
We leave the algorithmic details of these computations for a future communication.
In the sequel, we present a short introduction to Puiseux series and expansions
and explicitly work out
symbolically an example of tracing the Lemke-Howson path,
to give an overview of the various computations.

\subsection{Parametrization of the branches of a curve and Puiseux series}
\label{sec:puiseux}

Consider a plane algebraic curve $\cC$, defined
implicitly as the zero locus 
of the equation $f(x, y) = 0$, where $f \in \C[x, y]$ 
is a polynomial in two variables with complex coefficients
of degree $n$ w.r.t.\ $y$.
Further assume a point on $\cC$ which, without loss of generality, we can assume (after translation) to be the origin. 
If $\cC$ is smooth at 0, then we can exploit the implicit function theorem and we can locally parametrize the unique branch going through 0.
However, if 0 is not a smooth point, that is, if it is a singular point, then several branches might go through it. Moreover, 
the implicit function theorem is not applicable anymore. 

Under some mild assumptions, we can compute the number of branches going through the origin
and also find a local parametrization for them, 
by computing solutions $y(x)$ for $f$; that is, we interpret $f$ as a 
polynomial in $y$ with coefficients polynomials in $x$. 
The solutions are of the form $y = \sum_{i=1}^{\infty} a_i x^{i/n}$, 
and this representation leads to a parametrization 
of the branches of the form
\[
	x = t^n, \quad y = \sum\nolimits_{i=1}^{\infty} a_i t^i,
\]
where $t$ is a new variable; the parametrization 
is a formal power series.

We compute a parametrization $(t^n, \phi(t))$
using Newton's algorithm, where $n\ \in \N$, $\phi\in\C[[t]]$
and $\C[[t]]$ denotes the ring of power series in $t$ with complex coefficients.
One can prove that they are locally convergent
and that 
\[
    f(t^n, \phi(t))\,=\,0 \quad \text{ in } \C[t].
\]
We can rewrite the previous equation as 
$f(x, \phi(y^{1/n})) = 0$,
which allows to deduce that if we consider $f$ as a polynomial in $y$ with coefficients in $\C[x]$, then 
$\phi(x^{1/n})$ is a \emph{root} of $f$ in the larger ring $\C[[x^{1/n}]]\supset\C[x]$.

The value of $n$ can vary for different polynomials.
Even more, the various branches going through a singular point have different parametrizations, therefore, they are associated with different values of $n$; we call this the ramification index.
This leads to the introduction of the more general ring of Puiseux series
\[
    \C[[x^{\star}]]\,:=\,\displaystyle\bigcup_{n\in\mathbb{N}}\C[[x^{1/n}]].
\]
The quotient field of this ring, $\C\langle\langle x\rangle\rangle$,  
the \emph{field of Puiseux series}, consists of Puiseux series having, eventually, finitely many negative exponents. 
The field of Puiseux series is algebraically closed. 
In particular, $f$ splits over $\C\langle\langle x\rangle\rangle$ in linear factors and thus has $\deg_y(f)$ formal parametrizations, counted with multiplicities.
We refer the interested reader to the classical literature 
on algebraic curves for further details
\cite{casas-sing-curves-00,casas-bn-19,Walker-curves-78}.

In the case where the polynomial that defines the curve $\cC$ has integer (or rational) coefficients, that is $f \in \Z[x,y]$, 
there is a variant of Newton's algorithm that allows us to compute rational Puiseux series \cite{Duval-Q-Puiseux-89};
there are even effective bit complexity bounds \cite{PotRab-Qp-Puiseux-11,Walsh-Q-Puiseux-99}
for the corresponding algorithms. 
Hence, all the computation remain in the rationals.
When we are interested only in the real branches, then it is possible to exploit more geometric methods to determine their number,
e.g., \cite{ddrrs-dcg-22}.

If the curve lives in the $\C^d$ (or $\R^d$), where $d \geq 3$,
then it is still possible to compute the Puiseux expansions of 
its branches through a singular point by first projecting the curve to the plane and then lift the parametrization
\cite{Teissier-bias-intro-07}.

\subsection{An example on tracing}

\begin{ex}
To keep notation simple,
we consider a hybrid game, where the strategy space of the first
player is a simplex with two pure strategies and the strategy space
of the second player is the set of $2 \times 2$ real density matrices. To keep it in the framework of semidefinite games,
we can also write this, as in the following, as a $2\times 2\times 2\times 2$
semidefinite game, but we enforce $x_{12}=x_{21}=0$. For given $i,j$, we write $A_{ij..}$ for the slice 
$(A_{ijrs})_{1 \le r,s \le n})$ of the tensor $A$. Let
\[
  A_{11..} = \left( \begin{array}{cc}
    1 & 0 \\ 0 & 1
  \end{array} \right), \;
    A_{12..} = \left( \begin{array}{cc}
    0 & 0 \\ 0 & 0
  \end{array} \right), \;
    A_{21..} = \left( \begin{array}{cc}
    0 & 0 \\ 0 & 0
  \end{array} \right), \;
    A_{22..} = \left( \begin{array}{cc}
    2 & 2c \\ 2c & 2
  \end{array} \right)
\]
and
\[
    B_{11..} = \left( \begin{array}{cc}
    2 & 0 \\ 0 & 1
  \end{array} \right), \;
    B_{12..} = \left( \begin{array}{cc}
    0 & 0 \\ 0 & 0
  \end{array} \right), \;
    B_{21..} = \left( \begin{array}{cc}
    0 & 0 \\ 0 & 0
  \end{array} \right), \;
    B_{22..} = \left( \begin{array}{cc}
    2 & c \\ c & 1
  \end{array} \right).
\]
Specifically, we consider the choice $c=1/10$.
In the homotopy setting, we add $t$ to each entry of 
$A_{11..}$ so that $A_{11..}$ then becomes
\[
 A_{11..} = \left( \begin{array}{cc}
    t+1 & t \\ t & t+1
  \end{array} \right) .
\]
For large $t$, in particular $t > 1$ (see below), the unique Nash equilibrium is
\begin{equation}
  \label{eq:xyidentity}
    X = \left( \begin{array}{cc}
    1 & 0 \\ 0 & 0
  \end{array} \right), \;
    Y = \left( \begin{array}{cc}
    1 & 0 \\ 0 & 0
  \end{array} \right) \, .
\end{equation}
For decreasing values of $t$ with $t > 1$, this Nash equilibrium remains
the unique Nash equilibrium for the game $G(t)$. We have 
\[
    V(t) = \left( \begin{array}{cc}
    0 & 0 \\ 0 & 1
  \end{array} \right), \;
    W(t) = \left( \begin{array}{cc}
    0 & 0 \\ 0 & t-1
  \end{array} \right) \, .
\]

For $t=1$, an event point occurs and the combinatorics changes. 
Namely, the matrix $W$ becomes the zero matrix and for 
$t$-values slightly smaller than 1, the matrix $W$ stays the zero matrix.
For $t$-values slightly smaller than 1, the matrix $X$ is a rank 2 matrix.
Indeed, in the course of decreasing $t$ from 1,
the left upper entry of $X$ becomes smaller and the right lower
entry becomes larger.

Locally at $t=1$, besides the solution~\eqref{eq:xyidentity},
the following branches exist locally in the event point. 
We substitute $t = s+1$. Note that for $t < 1$ we have $s < 0$.
\begin{eqnarray*}
  X^{(1)} & = & \left( \begin{array}{cc}
    \frac{20s+8+25 s \sqrt{10s+4}}{4 (5s+2)}
     & 0 \\ [0.7ex]
    0 & - \frac{25s}{2 \sqrt{10s+4}}
  \end{array}
  \right), \\
  Y^{(1)} & = & \left( \begin{array}{cc}
    \frac{5(\frac{1}{2}s+\frac{2}{5}
    +\frac{1}{5}\sqrt{10s+4})}{5s+4} & 
    -\frac{5s}{2(5s+4)} \\ [0.7ex]
    -\frac{5s}{2(5s+4)} &  
    \frac{5(\frac{1}{2}s 
    +\frac{2}{5}
    -\frac{1}{5} \sqrt{10s+4})}{5s+4}
  \end{array}
  \right), \, \\
  v & = & \frac{60s+24+\sqrt{250s^3+500s^2+320s+64}}{8(5s+2)} \, , \quad
  w \ = \ \frac{9s+8}{5s+4} \, .
\end{eqnarray*}

The solutions are the solutions of the parametric systems of equations, 
obtained symbolically. The other real branches leave the strategy
spaces, as in the case of bimatrix games.

We give the corresponding Puiseux series.
This has to be seen as a vector of Puiseux series, in the sense that the components
have to fit together. For example, as it is the case here, there might be several Puiseux series 
for $w$ which are candidates by just looking at the symbolic expression for $w$;
in the example, $w$ can be computed from $Y$.
\begin{small}
\begin{eqnarray*}
  X^{(1)} & = & \left( \begin{array}{cc}
    1
    {+}\frac{25}{4}s 
    {-}\frac{125}{16} s^2
    {+}\frac{1875}{128} s^3        
    {-}\frac{15625}{512}s^4
    {+} \cdots
     & 0 \\ [0.7ex]
    0 & 
    {-}\frac{25}{4}s 
    {+}\frac{125}{16}s^2
    {-}\frac{1875}{128}s^3
    {+}\frac{15625}{512}s^4
    {+} \cdots
  \end{array}
  \right), \, \\
  Y^{(1)} & = & \left( \begin{array}{cc}
    1
   {-}\frac{25}{64}s^2 
    {+}\frac{125}{128}s^3
    {-}\frac{8125}{4096}s^4
    {+} \cdots & 
    {-}\frac{5}{8}s 
    {+}\frac{25}{32}s^2
    {-}\frac{125}{128}s^3
    {+}\frac{625}{512}s^4
    {+} \cdots \\
    [0.7ex]
    {-}\frac{5}{8}s 
    {+}\frac{25}{32}s^2
    {-}\frac{125}{128}s^3    
    {+}\frac{625}{512}s^4
    & 
     \frac{25}{64}s^2 
    {-}\frac{125}{128}s^3
    {+}\frac{8125}{4096}s^4
    {+} \cdots
  \end{array}
  \right)
\end{eqnarray*}
\end{small}
and
\begin{eqnarray*}
  v & = & 2
  +\frac{25}{64} s^2
  -\frac{125}{128}s^3
  +\frac{9375}{4096}s^4 \, , \\
  w & = & 2
  - \frac{1}{4}s
  + \frac{5}{16} s^2
  - \frac{25}{64} s^3
  +\frac{125}{256} s^4 \, .
\end{eqnarray*}
On the branch for $s < 0$, the eigenvalues of the diagonal matrix $X$ can be read off
from the diagonal elements and the eigenvalues of the matrix $W$ are both zeroes,
i.e., in that branch $W$ is the zero matrix. On that branch, the eigenvalues of $Y$
are $1$ and $0$ and the eigenvalues of $V$ are 
$\frac{(5s+4)\sqrt{2}}{4\sqrt{5s+2}}$ and $0$. With respect to a joint system of 
eigenvectors, we have a pairing of the eigenvalues of $Y$ and of $V$ such that
the product always gives zero.

At some value $t_2$, the next event point occurs. Here, the left upper entry
of $X$ becomes 0 and thus the right lower entry becomes 1. Using
a parametric description in $t$, we can symbolically compute
the $t$-value, where the left upper entry of $X$ becomes zero.
Namely, this happens at
\[
  t_2 \ = \ \frac{129}{125} - \frac{4 \sqrt{26}}{125} \ \approx \ 
  0.86883 .
\]
For values smaller than $t_2$, $X$ remains the matrix
\[
    X = \left( \begin{array}{cc}
    0 & 0 \\ 0 & 1
  \end{array} \right) \, .
\]
For $t$-values slightly smaller than 1, the matrix $Y$ is a rank 2 matrix
and the nondiagonal entries increase from zero and take nonzero-values.
In fact, locally one of the eigenvalues decreases starting from 1 and the
other 1 increases starting from 0.
At $t=0$, the next event point occurs and we arrive at
the Nash equilibrium
\[
    X \ = \ \left( \begin{array}{cc}
    0 & 0 \\ 0 & 1
  \end{array} \right) \, , \;
    Y \ = \
    \left( \begin{array}{cc}
    \frac{1}{2} + \frac{5\sqrt{26}}{52} & \frac{\sqrt{26}}{52} \\
    \frac{\sqrt{26}}{52} & \frac{1}{2} - \frac{5\sqrt{26}}{52}
  \end{array} \right)  \ \approx \
    \left( \begin{array}{cc}
    0.9903 & 0.0981 \\
    0.0981 & 0.0097
  \end{array} \right) .
\]
\end{ex}

Note that in our example, besides the event point $(X^{(1)}, Y^{(1)})$,
there exists another point which satisfies the 
are solutions of the equations in Lemma~\ref{le:equations1},
namely the point $X = \begin{pmatrix} 1 & 0 \\ 0 & 1 \end{pmatrix}$,
$Y = \begin{pmatrix} 0 & 1 \\ 0 & 1 \end{pmatrix}$ which is
a Nash equilibrium of the parametric game $G(t)$ for $t=1$.
However for this point, the matrix $V(X(t))$ is not positive semidefinite.

\section{On the smoothness of the curve\label{se:smooth}}

Clearly, the event points are non-smooth points on the 
Lemke-Howson path. 
In this section, we study the smoothness of the path segments
between two event points. We show that if a game is non-degenerate
then the path segments between two event points are smooth curves.
To this end, we study the derivatives of the 
equations. 

To reveal the interplay of the derivatives with
the combinatorics, it is useful to start from the polyhedral
situation of usual bimatrix games. Let $(A,B)$ be a bimatrix game,
where we can assume $A,B > 0$, that is, all entries are positive.
A point on the Lemke-Howson path is a tuple $(x,y,t)$ such that 
$$
f(x,y,t)=x\odot (\one_m - A_ty) = 0\in \R^m, \quad g(x,y,t)=(\one_n - B^Tx) \odot y = 0 \in \R^n,
$$
where $\odot$ denotes the Hadamard product, i.e., the componentwise multiplication.

In this setting, any non-event point is obviously smooth by construction, as the paths are all piecewise linear. 
Regardless of this fact, we include a formal proof for this statement in terms of the derivatives,
which we will generalize in the following 
to prove the smoothness in the case of semidefinite games.

By the Jacobian criterion, a point $(x,y,t)$ is smooth on the curve, if the Jacobian matrix of 
$\begin{pmatrix}
f \\
g
\end{pmatrix}$ with respect to $x$ and $y$ is regular, i.e., it
has full rank.
Here, the Jacobian is the $(m+n)\times (m+n)$ matrix
\[
  \left(
  \begin{array}{cc}
    \diag(\one_m - A_ty) & - \diag(x) \cdot A_t \\
    - \diag(y) B^T & \diag(\one_n - B^T x)
  \end{array}
  \right).
\]

At any point on the Lemke-Howson path, which is not an event point, the Jacobian is indeed regular.
However, a more useful viewpoint for us in this context is to use differentials.
Consider the differentials
\begin{eqnarray*}
d_{x,y} f(x,y,t) & = &
  dx \odot (\one_m - A_ty) - x \odot A \, dy \, , \\
d_{x,y} g(x,y,t) & = &
 - B^Tdx \odot y + (\one_n - B^T x)\odot dy \, .
\end{eqnarray*}

We record the following conversion between the view as differential
and the view as a Jacobian matrix, which we will prove in a more general version later on.
\begin{lemma}\label{le:Jac-differential0}
The Jacobian of 
$\begin{pmatrix}
f\\
g
\end{pmatrix}
=
\begin{pmatrix}
f(x,y,t) \\
g(x,y,t)
\end{pmatrix}
$
is regular if and only if there does not exist
$(dx,dy) \neq (0,0) \in \R^m \times \R^n$ such that both differentials $d_{x,y} f(x,y,t)$ and $d_{x,y} g(x,y,t)$ equal zero.
\end{lemma}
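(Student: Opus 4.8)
The plan is to recognize that the two differentials, read as linear maps in the increments $(dx,dy)$, are precisely the matrix–vector product of the displayed Jacobian with $(dx,dy)^T$. Once this identification is made, the statement reduces to the standard fact that a square matrix is regular exactly when its kernel is trivial. Let me write $J$ for the $(m+n)\times(m+n)$ Jacobian displayed above, so that ``regular'' means $J$ has full rank, equivalently $\ker J = \{0\}$.

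The key step is to convert every Hadamard product occurring in the differentials into a multiplication by a diagonal matrix, using the elementary identity $a \odot b = \diag(a)\,b = \diag(b)\,a$ valid for vectors $a,b$ of equal length. The one point requiring care is to always expose the differential vector ($dx$ or $dy$) as the factor on which the diagonal matrix acts, so that the remaining coefficient vector becomes the diagonal. Concretely, I would rewrite $dx \odot (\one_m - A_t y) = \diag(\one_m - A_t y)\, dx$ and $x \odot A_t\, dy = \diag(x)\, A_t\, dy$ for the first differential, and $B^T dx \odot y = \diag(y)\, B^T dx$ together with $(\one_n - B^T x) \odot dy = \diag(\one_n - B^T x)\, dy$ for the second. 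Substituting these into the expressions for $d_{x,y} f$ and $d_{x,y} g$ and stacking the results in block form gives exactly
\[
\begin{pmatrix} d_{x,y} f(x,y,t) \\ d_{x,y} g(x,y,t) \end{pmatrix}
\;=\; J \begin{pmatrix} dx \\ dy \end{pmatrix}.
\]

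With this identity established, the conclusion is immediate: the set of increments $(dx,dy)$ annihilating both differentials is precisely $\ker J$, so $J$ is regular if and only if $\ker J = \{0\}$, i.e.\ if and only if no nonzero $(dx,dy)$ makes both $d_{x,y} f$ and $d_{x,y} g$ vanish. I do not expect a genuine obstacle here; the whole content is a direct verification, and the only thing demanding attention is the bookkeeping in the Hadamard-to-diagonal conversion and the consistent use of the bonus-perturbed matrix $A_t$ (rather than $A$) in the $dy$-term. This same identification — differentials as the image of the Jacobian — is what I would later generalize to the semidefinite setting, where the Hadamard products are replaced by symmetric matrix products and the diagonal factors by the corresponding linear operators.
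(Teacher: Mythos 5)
Your proof is correct and follows essentially the same route as the paper: the paper defers the proof of this lemma to its more general semidefinite analogue (Lemma~\ref{le:Jac-differential}), whose argument is exactly your identification of the differentials with the matrix--vector product $J\,(dx,dy)^T$ followed by the observation that regularity of a square matrix is equivalent to triviality of its kernel. Your explicit Hadamard-to-diagonal bookkeeping (and the remark that the $dy$-term should consistently use $A_t$) is a faithful instantiation of that same argument in the bimatrix case.
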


We can now verify that for bimatrix games every non-event point of the Lemke- Howson path is smooth.

\begin{proof}
Let $(x,y,t)$ be a non-event point of the Lemke-Howson path, that is $f(x,y,t)=0$, $g(x,y,t)=0$. Let $K:=\supp(x), \; K^c := [m]\backslash K, \; L:=\supp(y), \; L^c := [n]\backslash L$. By construction, $\supp(\one_m - A_ty)=K^c$ and $\supp(\one_n - B^T_tx)=L^c$.

Let $d_{x,y} f(x,y,t)=0$ and $d_{x,y} g(x,y,t)=0$. The property
$d_{x,y} f(x,y,t)=0$ implies $\supp(dx)\subset K^c$ and similarly 
we obtain $\supp(dy)\subset L^c$.
Since the game is non-degenerate, the columns of the $|K| \times |L^c|$ submatrix of $B^T$ with entries $b_{kl}$ for $k\in K, \, l\in L^c$ are linearly independent. Therefore $dx=0$ and analogously we see that $dy=0$.
This proves the claim that the point $(x,y,t)$ is smooth.
\end{proof}

For semidefinite games, it is beneficial to describe the derivatives
in terms of differentials. We start from
\begin{eqnarray*}
  F(X,Y,t) & = & X \cdot (I_m - 
	\Phi_A(Y)) \, , \\
  G(X,Y,t) & = & Y \cdot (I_n - 
  	 \Phi_B'(X)) \, .
\end{eqnarray*}
Note that $\Phi_A$ depends on $t$. However, in our considerations $t$ is a fixed parameter and we therefore omit the dependency from $t$ in the notation.
The differentials are

\begin{eqnarray*}
D_{X,Y} F(X,Y,t)[H,K] & = &
  (I_m - \Phi_A(Y)) H
  - X \cdot \Phi_A(K) \, , \\
D_{X,Y} G(X,Y,t)[H,K] & = &
 - Y \cdot \Phi_B'(H)
 + (I_n - \Phi_B'(X)) K \, .
\end{eqnarray*}

We record the following conversion between the view as differential
and the view as a Jacobian matrix similar to the Lemma~\ref{le:Jac-differential}.

\begin{lemma}\label{le:Jac-differential}
The Jacobian of $\begin{pmatrix}
F\\
G
\end{pmatrix}$ is regular if and only if there does not exist
$(H,K) \neq (0,0)\in \R^{m\times m} \times \R^{n\times n}$ such that 
\begin{eqnarray}
    \label{eq:charact-jacobian1}
& &  D_{X,Y} F(X,Y,t) [H,K]= 0 \in \R^{m\times m}\\
&\text{and}&  D_{X,Y} G(X,Y,t) [H,K]= 0 \in \R^{n\times n}.
\end{eqnarray}
\end{lemma}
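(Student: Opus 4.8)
The lemma is a direct matrix-analysis analogue of Lemma~\ref{le:Jac-differential0}: it asserts that regularity of the Jacobian of the map $(X,Y)\mapsto(F,G)$ is equivalent to the nonexistence of a nonzero tangent direction $(H,K)$ annihilated by both differentials. The plan is to recognize this as a completely formal statement about linear maps once the differentials are correctly interpreted as linear operators. The only subtlety is bookkeeping: the variables $X$ and $Y$ live in $\R^{m\times m}$ and $\R^{n\times n}$, so the ``Jacobian'' must be understood as the matrix representation of the total derivative $D_{X,Y}(F,G)$ with respect to fixed bases (e.g.\ the standard matrix-unit bases, or the symmetrized $\svec$ coordinates if one restricts to the symmetric subspaces).

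\textbf{Main steps.} First I would make precise that $D_{X,Y}F(X,Y,t)$ and $D_{X,Y}G(X,Y,t)$ are linear maps in the increment $(H,K)$; this is already displayed in the excerpt, where $D_{X,Y}F[H,K]=(I_m-\Phi_A(Y))H - X\cdot\Phi_A(K)$ and symmetrically for $G$. Stacking these gives a single linear map
\[
  \mathcal{J}\colon (H,K)\ \longmapsto\ \bigl(D_{X,Y}F[H,K],\, D_{X,Y}G[H,K]\bigr),
\]
from the domain $\R^{m\times m}\times\R^{n\times n}$ to the codomain $\R^{m\times m}\times\R^{n\times n}$. The Jacobian matrix referenced in the statement is exactly the coordinate matrix of $\mathcal{J}$ in the chosen bases. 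Second, I would invoke the elementary fact that a linear map between two spaces of the same finite dimension is regular (invertible, full rank) if and only if its kernel is trivial. Here regularity of the square Jacobian matrix is equivalent to injectivity of $\mathcal{J}$, which is precisely the statement that no nonzero $(H,K)$ satisfies both $D_{X,Y}F[H,K]=0$ and $D_{X,Y}G[H,K]=0$. This yields both directions of the ``if and only if'' at once.

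\textbf{The one thing to check carefully.} The genuine content is the dimension match between domain and codomain, since the kernel-triviality criterion for full rank only applies to square matrices. Here both sides have dimension $m^2+n^2$ (or $\binom{m+1}{2}+\binom{n+1}{2}$ if one works throughout in the symmetric matrices $\sym_m\times\sym_n$, which is the natural setting since $X,Y$ are symmetric and $\Phi_A,\Phi_B'$ produce symmetric outputs). I expect this to be the main point to get right: one must fix a basis consistent with Lemma~\ref{le:Jac-differential0} and confirm that $F$ lands in the same space as $X$ (size $m\times m$) and $G$ in the same space as $Y$ (size $n\times n$), so that $\mathcal{J}$ is an endomorphism and full rank $\Leftrightarrow$ trivial kernel. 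With the square structure verified, the lemma is immediate.

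\textbf{Remark on proof economy.} Since the earlier bimatrix version (Lemma~\ref{le:Jac-differential0}) is promised to be ``proven in a more general version later on,'' I would present a single unified argument that subsumes both: the differentials of $(F,G)$ \emph{are} the rows of the Jacobian read off against the coordinate basis, so ``the Jacobian has full rank'' and ``the associated linear map is injective'' are two names for the same property. No computation with the explicit block structure is needed; the proof is one paragraph invoking the rank--nullity equivalence for square matrices.
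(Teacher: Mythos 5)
Your proposal is correct and follows essentially the same route as the paper: identify the Jacobian acting on a vector $v\in\R^{m^2+n^2}$ with the linear map $(H,K)\mapsto(D_{X,Y}F[H,K],\,D_{X,Y}G[H,K])$ on matrix pairs, and then invoke that a square matrix is regular if and only if its kernel is trivial. Your explicit check that domain and codomain have matching dimension is a sensible point of care, but it does not change the argument.
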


\begin{proof}
The rows of the Jacobian $J$ are indexed by either $(i,j) \in [m] \times [m]$ 
(referring to $F$)
or by $(i,j) \in [n] \times [n]$ (referring to $G$). 
Similarly, a vector $v \in \R^{m^2 + n^2}$, can be written as a pair of
two matrices, say, called $H^* \in \R^{m \times m}$ and $K^* \in \R^{n \times n}$.

For the rows of $J$ that refer to $F$, the matrix-vector product of the row
indexed by $(i,j)$ and $v$ evaluates to
\[
  \sum_{k,l=1}^m \left( \frac{\partial}{\partial X_{kl}} F_{ij} \right) H^*_{kl}
\]
and analogously for the rows referring to $G$
\[
  \sum_{k,l=1}^n \left( \frac{\partial}{\partial Y_{kl}} G_{ij} \right) K^*_{kl} \, .
\]
Hence, the vector $v$ is contained
in the kernel of $J$ if and only if the matrix~\eqref{eq:charact-jacobian1}
evaluates to the zero matrix at $(H^*,K^*)$. Since a matrix is 
singular if and only if it contains a non-zero vector in the kernel, the claim follows.
\end{proof}
In the diagonal situation, this coincides with the previous discussion of the bimatrix case, where the matrices $X$ and $Y$ are diagonal matrices. 
Thus, when we consider the differentials, only the elements on the diagonal matter matter, that is, $dX_{ii}$ and $dY_{jj}$.

Note that when considering the derivatives or 
differentials for points on the central path,
we can drop the symmetry requirement for our matrices. This is well known
in similar contexts, for example, the central path in semidefinite
programming (\cite[proof of Theorem 3.3]{deklerk-2003}). Namely, we know
that in the relative interior of the curve branches between two event points,
there exists a solution which meets the symmetry requirements. 
If we show that the Jacobian
in the space without symmetry requirements is non-degenerate then
this implies that there cannot be any other infinitesimally near
unsymmetric solution on the central path.

We arrive at the following sufficient condition for the
smoothness of the Lemke-Howson path in a given point.

\begin{thm}
Consider a non-degenerate semidefinite game and let $(X,Y,t)$ be on the Lemke-Howson path and 
neither an event point nor a Nash equilibrium. 
Then $(X,Y,t)$ is a smooth point on the Lemke-Howson path.
\end{thm}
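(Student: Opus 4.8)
The plan is to invoke Lemma~\ref{le:Jac-differential} and reduce the claim to showing that, at such a point, the common kernel of the two differentials is trivial; then the Jacobian of $(F,G)$ is regular and, by the implicit function theorem, the point is smooth. Throughout I write $W=I_m-\Phi_A(Y)$ and $V=I_n-\Phi_B'(X)$ as in the definitions of $F$ and $G$. First I would record the structural consequences of the point being neither an event point nor a Nash equilibrium: it is then a \emph{strictly complementary} Nash equilibrium of the perturbed game $G(t)$, so $\rank X+\rank W=m$ and $\rank Y+\rank V=n$, say with $\rank X=\rank Y=k$. Since $XW=0$ and $X,W$ are symmetric and commute, I diagonalize them simultaneously; strict complementarity means the index set $[m]$ splits as $S_X\sqcup S_W$, where $S_X$ carries the nonzero eigenvalues of $X$ and $S_W$ those of $W$, and likewise $[n]=T_Y\sqcup T_V$ for $Y$ and $V$.

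Next I extract the kernel constraints. Suppose $(H,K)$ lies in the common kernel, i.e.\ $WH=X\Phi_A(K)$ and $VK=Y\Phi_B'(H)$ (the vanishing of the differentials in Lemma~\ref{le:Jac-differential}). In the common eigenbasis, $WH$ has nonzero rows only in $S_W$, while $X\Phi_A(K)$ has nonzero rows only in $S_X$; as $S_W\cap S_X=\emptyset$, the equation forces both sides to vanish, giving
\[
   WH=0,\qquad X\Phi_A(K)=0,
\]
and symmetrically $VK=0$ and $Y\Phi_B'(H)=0$. Thus the two equations decouple: $H$ is constrained only by $WH=0$ and $Y\Phi_B'(H)=0$, and $K$ only by $VK=0$ and $X\Phi_A(K)=0$. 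Taking $H$ and $K$ symmetric (justified by the earlier remark that one may drop the symmetry requirement on the branches), $WH=0$ also yields $HW=0$, so $H$ is supported on the $S_X\times S_X$ block and annihilates $W$ from both sides, and analogously for $K$ and $V$.

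The decisive step is to integrate such a kernel element into an actual family of solutions. If $H\neq0$, I consider the line $X(s)=X+sH$ with $Y$ held fixed. Since $HW=0$, we get $X(s)\,W(Y)=XW+sHW=0$ for all $s$; since $\Phi_B'$ is linear and $Y\Phi_B'(H)=0$, we get $Y\,V(X(s))=YV(X)-sY\Phi_B'(H)=0$ for all $s$. Hence the entire complex line $\{(X+sH,\,Y):s\in\C\}$ lies in the intersection of the varieties $X\cdot W(Y)=0$ and $Y\cdot V(X)=0$; as $H\neq0$, this intersection is positive-dimensional, contradicting condition~II) of Definition~\ref{de:nondegen} for the game $G(t)$. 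The mirror-image argument with $Y(s)=Y+sK$ rules out $K\neq0$. Therefore $(H,K)=(0,0)$, the Jacobian is regular, and by Lemma~\ref{le:Jac-differential} the point is smooth.

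I expect the main obstacle to be precisely this integration step together with pinning down which non-degeneracy hypothesis is genuinely used. Strict complementarity and condition~I) alone do \emph{not} suffice: Example~\ref{ex:non-deg} exhibits a whole curve of strictly complementary equilibria along which the Jacobian is singular, so condition~II) is essential. The point that makes the argument go through is that the complementarity relations are preserved \emph{exactly}, not merely to first order, when $X$ is moved within the eigenstructure fixed by $W$, because $\Phi_A$ and $\Phi_B'$ are linear and $W(Y)$ depends only on the frozen $Y$. Additional care is needed to verify that the splitting of the kernel equations really follows from strict complementarity and that passing between the symmetric and the full matrix space leaves the non-degeneracy of the Jacobian unaffected, as indicated in the remark preceding the statement.
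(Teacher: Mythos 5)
Your proposal follows essentially the same route as the paper's proof: reduce via Lemma~\ref{le:Jac-differential} to showing the kernel of the differential is trivial, split $W(Y)H = X\Phi_A(K)$ into $W(Y)H=0=X\Phi_A(K)$ using the complementarity of $\im(X)$ and $\im(W(Y))$ (you phrase this in a common eigenbasis, the paper via images and kernels), and then observe that the whole line $(X+\lambda H,\,Y)$ lies in both varieties, contradicting the zero-dimensionality of condition~II). The only additions are cosmetic: you make explicit that condition~II), not strict complementarity alone, is what forces $H=K=0$, which matches the paper's intent.
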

\begin{proof}
Let $X \in \mathcal{X}$ and $Y \in \mathcal{Y}$ be  of rank $k$ and $t>0$ be fixed. Recall the notation $W(Y)= I_m -\Phi_{A}(Y)$ and $V(X)= I_n - \Phi_{B}(X)$.

By construction, $\im (X)=\ker(W(Y)), \; \ker (X)=\im(W(Y)), \; \im (Y)=\ker(V(X))$ and $\ker (Y)=\im(V(X))$, where $\im$ and $\ker$ denote the image and the kernel of a matrix.
Now recall the functions
\begin{eqnarray*}
  F(X,Y,t) \ = \ X \cdot W(Y) \, , \quad
  G(X,Y,t) \ = \ Y \cdot V(X) \, .
\end{eqnarray*}
By the theorem of the implicit function and lemma \ref{le:Jac-differential}, $(X,Y)$ is a smooth point on the Lemke-Howson path 
if the differentials $D_{X,Y} F [H,K]$ and $D_{X,Y} G [H,K]$ do not vanish for any $(H,K)\neq (0,0)$. 

First we observe what it means for $D_{X,Y} F [H,K]$ to vanish,
\begin{eqnarray*}
 D_{X,Y}F [H,K] = W(Y)H - X\Phi_A(K)=0 .
\end{eqnarray*}
That implies $W(Y)H = 0 = X\Phi_A(K)$ because (every column of) $W(Y)H$ is contained in the kernel of $X$ and (every column of) $X\Phi_A(K)$ is contained in the image of $X$. Similarly, if
\begin{eqnarray*}
D_{X,Y}G [H,K] = - Y\Phi_B'(H) + V(X)K=0
\end{eqnarray*}
then $V(X)K = 0 = Y\Phi_B'(H)$.
We are left to show that $(H,K)=(0,0)$.
Consider $X':=X + \lambda H$ for some arbitrary and fixed $\lambda \in \R$.
We can see that
\begin{eqnarray*}
X' W(Y)= X W(Y) + \lambda H W(Y) & =&  0, \\
Y V(X') = Y (I_n - \Phi_B'(X + \lambda H))= Y (I_n - \Phi_B'(X) - \lambda \Phi_B'(H))= YV(X) & =& 0.
\end{eqnarray*}
Since the game is non-degenerate, only finitely many matrices $(X',Y)$ can be in both of those varieties, therefore $H=0$.
Using an analogous argument, we see $K=0$, which completes the proof.
\end{proof}

\section{Conclusion and outlook}

We have provided a generalization of Lemke–Howson paths for bimatrix games to semidefinite games, offering both a structural perspective on the Nash equilibria of semidefinite games and numerically-algebraic algorithmic techniques for computing them.
Our discussion of Lemke–Howson paths began with a formulation of a semidefinite game as a semidefinite complementarity problem.
Analogous to the connection between Lemke–Howson paths and linear complementarity problems in the bimatrix case, our approach for semidefinite games naturally extends to the broader class of semidefinite complementarity problems, which encompass a range of noncommutative complementarity problems.

The investigation of Lemke–Howson paths has also led to natural questions regarding degeneracy in semidefinite games. In the case of bimatrix games, degeneracy is well understood, and several equivalent formulations exist from different perspectives (see \cite{von-stengel-handbook}).
We have initiated the study of degeneracy in semidefinite games, and a key direction for future research is to understand how the various notions of degeneracy known for bimatrix games can be extended and adapted to the semidefinite setting.

\subsubsection*{Acknowledgments}

CI, TT and ET are partially supported by the joint PROCOPE project ``Quantum games
and polynomial optimization'' of the MEAE/MESR and the DAAD (57753345).
TT is partially supported by the DFG Priority Program
``Combinatorial Synergies'' (grant no.\ 539847176).
ET is partially supported by the PGMO grant SOAP (Sparsity in Optimization via Algebra and Polynomials).
AV is partially supported by the National Research Foundation, Singapore under its QEP2.0 programme (NRF2021-QEP2-02-P05).
\bibliographystyle{abbrv}
\bibliography{games}
	
\end{document}